\numberwithin{equation}{section}
\theoremstyle{plain}
\newtheorem{theorem}{Theorem}[section]
\newtheorem{proposition}[theorem]{Proposition}
\newtheorem{corollary}[theorem]{Corollary}
\newtheorem{lemma}[theorem]{Lemma}
\theoremstyle{definition}
\newtheorem{remark}[theorem]{Remark}
\newtheorem{definition}{Definition}
\newtheorem{example}{Example}
\newcommand{\RR}{{\mathbb R}}
\newcommand{\ov}{\overline}
\newcommand{\la}{\lambda}
\renewcommand{\epsilon}{\varepsilon}
\newcommand{\diag}{\operatorname{diag}}
\newcommand{\indi}{\operatorname{ind}}
\newcommand{\cri}{\operatorname{Crit}}
\newcommand{\sing}{\mathrm{Sing}}
\newcommand{\cC}{\EuScript{C}}
\newcommand{\J}{\EuScript{J}}
\newcommand{\Mundo}{\mathfrak{X}^{1}(M)}
\begin{document}

\title[Singular hyperbolicity and Sectional Lyapunov exponents of various orders] {Singular hyperbolicity and Sectional Lyapunov exponents of various orders}


\thanks{
  L.S. is partially supported by a Fapesb-JCB0053/2013, PRODOC-UFBA2014, CNPq postdoctoral schoolarship at Universidade Federal do Rio de Janeiro. She also thanks to A. Hammerlindl for fruitful conversations during International Conference Dynamics Beyond Uniform Hyperbolicity - Provo-UT 2017.
}

\subjclass{Primary: 37D30; Secondary: 37D25.}
\renewcommand{\subjclassname}{\textup{2000} Mathematics
  Subject Classification}
\keywords{Dominated splitting,
  partial hyperbolicity, sectional hyperbolicity,
  Lyapunov function.}


\author{Luciana Salgado}

\address{Universidade Federal da Bahia\\ Instituto de Matem\'atica\\
Avenida Adhemar de Barros, s/n, Ondina, 40170-110, Salvador, Bahia, Brazil\\
Email: lsalgado@ufba.br, lsalgado@im.ufrj.br}

\begin{abstract}
It is given notions of singular hyperbolicity and sectional Lyapunov exponents of orders beyond the classical ones, namely, other dimensions besides the dimension 2 and the full dimension of the central subbundle
  of the singular hyperbolic set.
   It is obtained a characterization of dominated splittings, partial and singular hyperbolicity in this broad sense, by using Lyapunov exponents and the notion of infinitesimal Lyapunov functions.
  Furthermore, it is given alternative requirements to obtain singular hyperbolicity.
  As an application we obtain some results related to singular hyperbolic sets for flows.
\end{abstract}

\maketitle

\section{Introduction and statement of results}
\label{sec:int-stat}
\hfill

Let $M$ be a compact $C^{\infty}$ riemannian $n$-dimensional manifold, $n \geq 3$.
Let $\Mundo$ the set of $C^1$ vector fields on $M$, endowed with the $C^1$ topology.
And denote by $X_t: M \to M$ the $C^1$ flow generated by $X$.

In a remarkable work Morales, Pac\'ifico and Pujals \cite{MPP99} defined the so called \emph{singular hyperbolic systems}, in order to describe the behaviour of Lorenz attractor.
It is an extension of the hyperbolic theory for invariant sets for flows which are not (uniformly) hyperbolic,
but which have some robust properties, certain kind of weaker hyperbolicity and also admit singularities.
In \cite{MPP04}, the same authors proved that every robustly transitive singular set for a three dimensional flow
is a partially hyperbolic attractor or repeller and the singularities in this set must be Lorenz-like.

In this paper, we prove a relation between the $\J$-algebra of Potapov \cite{Pota60,Pota79,Wojtk01}
a new definition of singular hyperbolicity, envolving intermediate dimensions of the central subbundle.

The $\J$-algebra here means a pseudo-euclidean structure given by $C^1$ non-degenerate quadratic form $\J$,
defined on $\Lambda$, which generates positive and negative cones with maximal dimension $p$ and $q$, respectively.

The maximal dimension of a cone in $T_xM$ is the maximal dimension of the subspaces contained in there.

   We are going to prove sufficient and necessary conditions for a flow to be singular hyperbolic of some order,
   in a sense to be clarified below.

  It is also given a characterization of singular and sectional hyperbolicity for a flow over a compact invariant set, improving a result in \cite{ArSal2012}.

The text is organized as follow. In first section, it is given the main definitions and stated the results.
In second section, it is presented the main tools by using the notion of $\J$-algebra of Potapov.
In third section, it is proved the main theorems.

\subsection{Preliminary definitions and Main results}
\label{sec:prelim-definit}
\hfill

Before presenting the main statements, we give some definitions.

Let $M$ be a connected compact finite $n$-dimensional manifold, $n \geq 3$,
with or without boundary. We consider a vector
field $X$, such that $X$ is inwardly transverse to the
boundary $\partial M$, if $\partial M\neq\emptyset$. The flow generated by $X$ is denoted by $X_t$.

An \emph{invariant set} $\Lambda$ for the flow of $X$ is a
subset of $M$ which satisfies $X_t(\Lambda)=\Lambda$ for all
$t\in\RR$.  The \emph{maximal invariant set} of the flow is
$M(X):= \cap_{t \geq 0} X_t(M)$, which is clearly a compact
invariant set.

A \emph{singularity} for the vector field $X$
is a point $\sigma\in M$ such that $X(\sigma)=0$ or,
equivalently, $X_t(\sigma)=\sigma$ for all $t \in \RR$. The
set formed by singularities is the \emph{singular set of
  $X$} denoted $\sing(X)$ and $Per(X)$ is the set of periodic points of $X$.
  We say that a singularity \emph{is hyperbolic} if the eigenvalues of the derivative
$DX(\sigma)$ of the vector field at the singularity $\sigma$
have nonzero real part. The set of critical elements of $X$ is
the union of the singularities and the periodic orbits of $X$, and will be denoted by $\cri(X)$.

We recall that a \emph{hyperbolic} set for a flow $X_t$ is an
invariant subset of $\Lambda \subset M$ with a decomposition $T_\Lambda M= E^s\oplus E^X \oplus E^u$
of the tangent bundle which is a continuous splitting,
where $E^X$ is the direction of the vector field, the
subbundles are invariant under the derivative $DX_t$ of the flow
\begin{align*}
  DX_t\cdot E^*_x=E^*_{X_t(x)},\quad
  x\in\Lambda, \quad t\in\RR,\quad *=s,X,u;
\end{align*}
$E^s$ is uniformly contracted by $DX_t$ and $E^u$ is
uniformly expanded: there are $K,\lambda>0$ so that
\begin{align}\label{eq:def-hyperbolic}
\|DX_t\mid_{E^s_x}\|\le K e^{-\lambda t},
  \quad
  \|DX_{-t} \mid_{E^u_x}\|\le K e^{-\lambda t},
  \quad x\in\Lambda, \quad t\in\RR.
\end{align}

Recall that the index of a hyperbolic periodic orbit of a flow is the dimension of the contracting subbundle
of its hyperbolic splitting.

Our main results is the following.

Let $\Lambda \subset M$ be a compact invariant subset for $X$.

\begin{definition}\label{def1}

  A \emph{dominated splitting} over a compact invariant set $\Lambda$ of $X$
  is a continuous $DX_t$-invariant splitting $T_{\Lambda}M =
  E \oplus F$ with $E_x \neq \{0\}$, $F_x \neq \{0\}$ for
  every $x \in \Lambda$ and such that there are positive
  constants $K, \lambda$ satisfying
  \begin{align}\label{eq:def-dom-split}
    \|DX_t|_{E_x}\|\cdot\|DX_{-t}|_{F_{X_t(x)}}\|<Ke^{-\la
      t}, \ \textrm{for all} \ x \in \Lambda, \ \textrm{and
      all} \,\,t> 0.
  \end{align}
\end{definition}

A compact invariant set $\Lambda$ is said to be
\emph{partially hyperbolic} if it exhibits a dominated
splitting $T_{\Lambda}M = E \oplus F$ such that subbundle
$E$ is uniformly contracted. In this case $F$ is the
\emph{central subbundle} of $\Lambda$.

A compact invariant set $\Lambda$ is said to be
\emph{singular-hyperbolic} if it is partially hyperbolic and
the action of the tangent cocycle expands volume along the
central subbundle, i.e.,
\begin{align}\label{eq:def-vol-exp}
      \vert \det (DX_t\vert_{F_x}) \vert > C e^{\la t},
      \forall t>0, \ \forall \ x \in \Lambda.
    \end{align}

The following definition was given as a particular case of singular hyperbolicity.

\begin{definition}\label{def:sec-exp}
  A \emph{sectional hyperbolic set} is a singular hyperbolic one such that
    for every two-dimensional linear subspace
   $L_x \subset F_x$ one has
    \begin{align}\label{eq:def-sec-exp}
      \vert \det (DX_t \vert_{L_x})\vert > C e^{\la t},
      \forall t>0.
    \end{align}
  \end{definition}

\subsection{Singular hyperbolicity of various orders}\label{sec:p-sing-hyp}
\hfill

Given $E$ a vector space, we denote by $\wedge^p E$ the exterior power of order $p$ of $E$, defined as follows.
If $v_1,\dots, v_n$ is a basis of $E$ then $\wedge^p E$ is generated by
$\{v_{i_1}\wedge \cdots \wedge v_{i_p}\}_{1 \leq i \leq n, i_j \neq i_k, j \neq k}$.
Any linear transformation $A:E\to F$ induces a transformation $\wedge^p A:\wedge^p E\to\wedge^p F$.
Moreover, $v_{i_1}\wedge \cdots \wedge v_{i_p}$ can be viewed as the $p$-plane generated by
$\{v_{i_1}, \cdots, v_{i_p}\}$ if $i_j \neq i_k, j \neq k$.
As a reference for more information about exterior powers it is recommended \cite{A}, for instance.

We may define a new kind of singular hyperbolicity.

\begin{definition}\label{def:p-sing-hyp}
  A compact invariant set $\Lambda$ is $p$-singular hyperbolic (or $p$-sectionally hyperbolic) for a $C^1$ flow $X$
  if there exists a partially hyperbolic splitting $T_{\Lambda}M = E \oplus F$ such that $E$ is uniformly contracting and the central subbundle $F$ is $p$-sectionally expanding, with $2 \leq p \leq \dim(F)$.
\end{definition}

If $L_x$ is a $p$-plane, we can see it as $\widetilde{v}\in \wedge^p(F_x)\setminus \{0\}$ of norm one.
Hence, to obtain the singular expansion we just need to show that for some $\lambda>0$ and every $t>0$
holds the following inequality
$$\|\wedge^p DX_t(x).\widetilde{v}\|>Ce^{\lambda t}.$$

Our first main result concerns in a characterization of singular hyperbolicity of any order
via infinitesimal Lyapunov functions,
following \cite{ArSal2012}, \cite{ArSal2015}, \cite{BurnKatok94}, \cite{Pota79}, \cite{Wojtk85}, \cite{Wojtk01}.

Recall that, if $T: Z \to Z$ is a measurable map, we say that a probability measure $\mu$ is an invariant measure of $T$, if $\mu(T^{-1}(A)) = \mu(A)$, for every measurable set $A \subset Z$.
We say that $\mu$ is an invariant measure of $X$ if it is an invariant measure of $X_t$ for every $t \in \mathbb{R}$.
We will denote by $\mathcal{M}_X$ the set of all invariant measures of $X$.
A subset $Y\subset Z$ has \emph{total probability} if for every $\mu\in \mathcal{M}_X$ we have $\mu(Y)=1$ (see \cite{Man82}).

\begin{theorem}
\label{mthm:sec-hyp-equiv}
A compact invariant set $\Lambda$ whose singularities are hyperbolic (with $\indi \geq \indi(\J)$)
for $X \in \Mundo$ is a $p$-singular hyperbolic set if, and only if,
there exist a neighborhood $U$ of $\Lambda$ and a field of non-degenerate quadratic forms $\J$ on $U$
with index $1 \leq \indi(\J) \leq n-2$ such that $X$ is non-negative strictly $\J$-separated and
the spectrum of the diagonalized operator $DX_t$ satisfies the properties:
\begin{enumerate}
\item{} $r_1^- < 1$; and
\item{} $\Pi_{1}^{p} \  r_i^+ > 1$, \ \textrm{where} \ $2 \leq p \leq \dim(M) - \indi(\J)$,
\end{enumerate}
in a total probability subset of $\Lambda$.
Moreover, if $r_i^+ \cdot r_j^+ > 1$, for all $1 \leq i, j, \leq p, i \neq j$, in a total probability set,
then $\Lambda$ is a sectional-hyperbolic set.
\end{theorem}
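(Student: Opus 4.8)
\emph{Sketch of proof of the final assertion.}
The plan is to extract the partially hyperbolic splitting and the volume expansion along the center from the first half of the theorem, and then to reinforce that expansion from the dimension $p$ of the exterior power down to dimension $2$. By the equivalence already proven, the standing hypotheses — with $\J$ a field of non-degenerate quadratic forms of index $k:=\indi(\J)$, $1\le k\le n-2$, and $X$ non-negative strictly $\J$-separated — furnish a partially hyperbolic decomposition $T_\Lambda M=E\oplus F$ with $\dim E=k$, $E$ uniformly contracted, and $F$ the central subbundle, which is $p$-sectionally expanding; since a $p$-sectionally expanding bundle expands $\dim F$-dimensional volume as well, $\Lambda$ is in particular singular hyperbolic. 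Because $X$ is strictly $\J$-separated, $\J$ restricts to a negative definite form on $E$ and to a positive definite form on $F$, so the diagonal entries $r_i^{-}$ and $r_i^{+}$ of the $\J$-diagonalization of $DX_t$ agree, up to a uniformly bounded factor, with the singular values of $DX_t$ along $E$ and along $F$ respectively; condition (1), $r_1^{-}<1$, is the infinitesimal expression of the uniform contraction of $E$. It therefore remains only to prove that $F$ is $2$-sectionally expanding.

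This is precisely what the extra hypothesis says once it is read in the second exterior power. A two-plane $L_x\subset F_x$ is represented by a decomposable unit vector $\widetilde v\in\wedge^2 F_x$, and $|\det(DX_t|_{L_x})|=\|\wedge^2 DX_t(x)\,\widetilde v\|$; the infimum of this quantity over all two-planes contained in $F_x$ equals the product of the two smallest among the numbers $r_1^{+},\dots,r_{\dim F}^{+}$, namely $r_1^{+}r_2^{+}$ in the increasing ordering. The hypothesis ``$r_i^{+}r_j^{+}>1$ for all $1\le i\ne j\le p$ on a total probability set'' contains in particular $r_1^{+}r_2^{+}>1$ on that set, which states exactly that every Lyapunov exponent of the continuous $DX_t$-invariant cocycle obtained by restricting $\wedge^2 DX_t$ to the subbundle $\wedge^2 F$ is strictly positive on a total probability subset of $\Lambda$.

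Finally I would apply the uniformization principle underlying this circle of results (in the spirit of \cite{ArSal2012} and of the $\J$-algebra tools prepared in the next section): a continuous linear cocycle over a compact invariant set all of whose Lyapunov exponents are positive on a set of total probability is uniformly exponentially expanding. Applied to $\wedge^2 DX_t|_{\wedge^2 F}$ this yields constants $C,\lambda>0$ such that $\|\wedge^2 DX_t(x)\,w\|\ge Ce^{\lambda t}$ for every unit $w\in\wedge^2 F_x$, every $x\in\Lambda$ and every $t>0$; restricting to decomposable $w$ gives $|\det(DX_t|_{L_x})|>Ce^{\lambda t}$ for every two-dimensional $L_x\subset F_x$, every $x$ and every $t>0$. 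Combined with the partial hyperbolicity and the volume expansion along $F$ from the first half, this is precisely Definition~\ref{def:sec-exp}, so $\Lambda$ is sectional-hyperbolic. The genuinely delicate step is the uniformization: upgrading pointwise positivity of the exponents on a total probability set to uniform exponential lower bounds valid at \emph{every} point of $\Lambda$. That is where one must invoke the non-trivial cocycle estimates of the paper rather than any soft ergodic argument, and it is also the reason the hypotheses are stated along orbits of total probability in the first place.
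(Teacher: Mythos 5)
Your sketch addresses only the closing ``moreover'' clause of Theorem~\ref{mthm:sec-hyp-equiv}, explicitly presupposing ``the equivalence already proven'' for the main if-and-only-if. As a proof of the full statement it is therefore incomplete. It is worth noting, however, that the paper's own proof has the opposite gap: it proves the main equivalence (forward direction via adapted metrics~\cite{Goum07}, Proposition~\ref{pr:J-separated-spectrum} and Corollary~\ref{cor:compos-max-exp}; converse via Theorem~\ref{thm:lyap-exp-sing-val}, Theorem~\ref{mthm:Lyapunov-domination} and Corollary~\ref{mcor:p-sing-lyap-exp}) and then stops, never returning to the final assertion. So your argument is genuinely supplementary rather than redundant.

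For the part you do treat, the route is sound and lies squarely within the paper's toolkit. The reduction of two-dimensional determinant expansion to the subbundle $\wedge^2 F$ is exactly the mechanism of Proposition~\ref{pr:product-vol-exp} with $d=2$, so $\sigma_2 = r^+_1 r^+_2$ is the correct identification of the infimum over $2$-planes. The passage from the pointwise hypothesis $r^+_1 r^+_2>1$ on a total probability set to positivity of all Lyapunov exponents of $\wedge^2DX_t\vert_{\wedge^2F}$ uses Theorem~\ref{thm:lyap-exp-sing-val}: integrating $\log(r_1^+r_2^+)>0$ against any invariant $\mu$ (the integrability being Wojtkowski's) gives $\chi^+_1+\chi^+_2>0$, hence $\chi^+_i+\chi^+_j>0$ for all $i<j$. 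The uniformization step you flag as delicate is precisely Proposition~\ref{prop3.4-arbieto}, applied to the subadditive family $f_t(x)=\log\|\wedge^2 DX_{-t}\vert_{\wedge^2 F_{X_t(x)}}\|$, as in the paper's proof of Corollary~\ref{mcor:p-sing-lyap-exp}; you would do well to name it rather than gesture at a ``uniformization principle.'' Your observation that $p$-sectional expansion of $F$ already forces $\dim F$-dimensional volume expansion (so the set is singular hyperbolic before one even invokes the extra hypothesis) is a small lemma the paper leaves unsaid: it follows because the geometric mean over all $p$-element products of singular values of $DX_t\vert_F$ equals $(\det DX_t\vert_F)^{p/\dim F}$, and each factor in that mean is bounded below by $Ce^{\lambda t}$. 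With that supplied, what you have is a correct sketch of the missing ``moreover'' half, using the same $\J$-algebra and subadditive-cocycle machinery the paper relies on throughout.
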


In \cite{ArSal2012}, the authors proved the next result about sectional hyperbolicity.

As a direct application of Theorem \ref{mthm:sec-hyp-equiv} and Theorem\ref{thm:lyap-exp-sing-val} in Section $2$,
we reobtain the next one, without the assumption on the singularities.

\begin{corollary}\cite[Theorem D]{ArSal2012}
\label{mcor:2-sec-exp-J-monot}
  Suppose that all singularities of the attracting set $\Lambda$ of $U$ are
  all of them sectional-hyperbolic with index $\indi(\sigma) \geq \indi(\Lambda)$.
  Then, $\Lambda$ is a sectional-hyperbolic set for $X_t$ if, and
  only if, there is a field of quadratic forms $\J$ with index equal to $\indi(\Lambda)$
  such that $X_t$ is a non-negative strictly $\J$-separated
  flow on $U$ and for each compact invariant subset $\Gamma$ in $\Lambda^*=\Lambda \setminus \sing(X)$
  the linear Poincar\'e flow is strictly $\J_0$-monotonous for some field of quadratic forms $\J_0$
  equivalent to $\J$.
\end{corollary}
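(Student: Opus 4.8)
The plan is to derive Corollary~\ref{mcor:2-sec-exp-J-monot} as a special case of Theorem~\ref{mthm:sec-hyp-equiv} by choosing $p = 2$ and taking $\indi(\J) = \indi(\Lambda)$, and then reconciling the two formulations: the ``spectral'' one in terms of the diagonalized operator (conditions $r_1^- < 1$ and $r_1^+ r_2^+ > 1$ on a total probability set, plus the pairwise condition $r_i^+ r_j^+ > 1$ giving sectional hyperbolicity) and the ``monotonicity'' one in terms of the linear Poincar\'e flow being strictly $\J_0$-monotonous on compact invariant subsets of $\Lambda^* = \Lambda \setminus \sing(X)$. First I would invoke Theorem~\ref{thm:lyap-exp-sing-val} from Section~2 (the result relating Lyapunov exponents, singular values, and the diagonalization of $DX_t$) to translate the spectral inequalities into statements about the central subbundle $F$: $r_1^- < 1$ on a total probability set is equivalent, via the standard Oseledets-type argument together with compactness of $\Lambda$ and continuity of the splitting, to uniform contraction of $E$; and $r_1^+ r_2^+ > 1$ on a total probability set with the pairwise condition is equivalent to $2$-sectional (hence, by Definition~\ref{def:sec-exp}, sectional) expansion of $F$. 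The point is that an inequality holding on a total probability subset, for a quantity that is subadditive/multiplicative along orbits and governed by a continuous cocycle over a compact invariant set, upgrades to a uniform exponential estimate — this is the content that Theorem~\ref{mthm:sec-hyp-equiv} already packages.

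Next I would address the hypothesis on singularities. Theorem~\ref{mthm:sec-hyp-equiv} carries the assumption that the singularities are hyperbolic with $\indi(\sigma) \geq \indi(\J)$; in the corollary the singularities of the attracting set are assumed sectional-hyperbolic with $\indi(\sigma) \geq \indi(\Lambda)$, and the claim is that one can drop any \emph{extra} assumption and recover exactly Theorem~D of \cite{ArSal2012}. Since $\Lambda$ is an attracting set of $U$, one has the extra structure that $\Lambda = \bigcap_{t \geq 0} X_t(U)$ with $U$ a trapping region; this lets me use the attractor hypothesis to control the behavior near singularities without separately postulating hyperbolicity of the flow there — the sectional-hyperbolic condition on each $\sigma$ already forces the local splitting at $\sigma$ to be compatible with a quadratic form of index $\indi(\Lambda)$, so the index condition $\indi(\sigma) \geq \indi(\Lambda)$ is precisely what makes $\J$ (with $\indi(\J) = \indi(\Lambda)$) extendable across the singularities while remaining non-degenerate and making $X_t$ non-negative strictly $\J$-separated on all of $U$.

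Then I would handle the equivalence between ``$X_t$ strictly $\J$-separated on $U$'' and ``the linear Poincar\'e flow is strictly $\J_0$-monotonous for some $\J_0$ equivalent to $\J$ on each compact invariant $\Gamma \subset \Lambda^*$.'' This is where the tools of Section~2 on the $\J$-algebra of Potapov enter: on $\Lambda^*$ the flow direction $E^X$ is nonvanishing, so one can quotient $T_\Lambda M$ by $E^X$ and push $\J$ down to a quadratic form $\J_0$ on the normal bundle; strict $\J$-separation of $DX_t$ is then equivalent to strict $\J_0$-monotonicity of the linear Poincar\'e flow $P_t$, because $P_t$ is exactly the action of $DX_t$ on the normal bundle and the quadratic form on the flow direction is degenerate there. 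I would make the ``equivalent'' ($\J_0 \sim \J$, i.e.\ comparable up to bounded positive factors) precise using compactness of $\Gamma$ and the fact that $\|X\|$ is bounded below on $\Gamma \subset \Lambda^*$, so the projection along $E^X$ has bounded norm. Assembling these pieces: Theorem~\ref{mthm:sec-hyp-equiv} with $p = 2$ gives sectional hyperbolicity $\iff$ existence of such a $\J$; the monotonicity reformulation on $\Lambda^*$ is equivalent to the $\J$-separation on $U$; and the attractor hypothesis lets the singularities be absorbed, yielding exactly the statement of Theorem~D.

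The main obstacle I anticipate is the careful bookkeeping near the singularities: one must verify that the field of quadratic forms $\J$ constructed (or extended) over $U$ remains non-degenerate of the right index \emph{at} each $\sigma$, and that the local sectional-hyperbolic structure at $\sigma$ — in particular the index inequality $\indi(\sigma) \geq \indi(\Lambda)$ — is not merely necessary but sufficient to guarantee non-negative strict $\J$-separation in a full neighborhood of $\sigma$ (and not just on the regular part). Equivalently, the delicate point is that the linear Poincar\'e flow is only defined on $\Lambda^*$, so the ``for each compact invariant $\Gamma \subset \Lambda^*$'' quantifier in the corollary must be shown to carry the same information as the global $\J$-separation on $U$; this requires an exhaustion/limiting argument as $\Gamma$ approaches the singularities, controlled by the attracting-set hypothesis. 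Everything else — the passage between spectral radii $r_i^\pm$ and uniform expansion/contraction, and the equivalence of $2$-sectional expansion with sectional expansion — is routine given Theorem~\ref{thm:lyap-exp-sing-val} and Definition~\ref{def:sec-exp}.
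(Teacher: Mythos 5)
Your overall strategy — specialize Theorem~\ref{mthm:sec-hyp-equiv} to $p=2$ with $\indi(\J)=\indi(\Lambda)$, then translate the spectral conditions via Theorem~\ref{thm:lyap-exp-sing-val} — is the same route the paper gestures at (the paper presents this corollary without a detailed proof, calling it ``a direct application''). However, there is a concrete error in your third paragraph.

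You claim that ``strict $\J$-separation of $DX_t$ is then equivalent to strict $\J_0$-monotonicity of the linear Poincar\'e flow $P_t$, because $P_t$ is exactly the action of $DX_t$ on the normal bundle.'' That equivalence does not hold, and conflating the two properties would lose exactly the information you need. In Definition~\ref{def:J-separated} the two notions are deliberately distinct: $\J$-separation is cone invariance (and, via Proposition~\ref{pr:J-separated-spectrum}(2), only gives $r_-^1 \le r_+^1$ together with the existence of the polar decomposition $L=RU$), whereas by Proposition~\ref{pr:J-separated-spectrum}(3) and Remark~\ref{rmk:J-mon-spec}, strict $\J$-monotonicity is the additional constraint that $r_-^1 < 1$ and $r_+^1 > 1$. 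A cocycle can be strictly $\J$-separated and still have $r_+^1 \le 1$. This is precisely why the corollary lists \emph{two} conditions: strict $\J$-separation of $X_t$ on $U$ (which, via the splitting into $\J$-positive/negative subspaces, yields the dominated splitting — cf.\ the reverse direction of Theorem~\ref{mthm:Lyapunov-domination}) \emph{and} strict $\J_0$-monotonicity of the linear Poincar\'e flow on compact invariant $\Gamma\subset\Lambda^*$ (which supplies the spectral gap $r_1^- < 1$, $r_1^+ > 1$ on the normal bundle, feeding into the hypotheses $r_1^-<1$ and $\Pi_{i=1}^2 r_i^+>1$ of Theorem~\ref{mthm:sec-hyp-equiv} once you account for the flow direction). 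Your proposed proof needs to keep these as separate inputs and show that both together recover the spectral hypotheses of Theorem~\ref{mthm:sec-hyp-equiv}, rather than deriving one from the other.

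A smaller point: your second paragraph frames the corollary as ``dropping any extra assumption'' to recover Theorem~D of \cite{ArSal2012}. The logical direction is the opposite. Theorem~\ref{mthm:sec-hyp-equiv} has the \emph{weaker} hypothesis (only hyperbolicity of singularities), and the corollary, which \emph{keeps} the stronger sectional-hyperbolicity assumption on $\sigma$, is then an easy special case; the improvement of this paper over \cite{ArSal2012} lies in Theorem~\ref{mthm:sec-hyp-equiv} itself, not in the corollary. Other than these two issues, the spectral/Lyapunov translation and the singularity-index bookkeeping in your sketch are reasonable.
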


  Thus, Theorem \ref{mthm:sec-hyp-equiv} is an improvement to this result, once it does not requires a priori
  sectional hyperbolicity on the singularities.

\vspace{0.1in}

In \cite{AraArbSal}, this author together with V. Araujo and A. Arbieto,
proved that the requirements in the definition of sectional hyperbolicity can be weakened,
demanding the domination property only over the singularities, because in this setting the splitting is in fact dominated.
More precisely, we proved the next result.

\begin{theorem}\cite[Theorem A]{AraArbSal}\label{mthm:domination-ararbsal}
  Let $\Lambda$ be a compact invariant set of $X$ such that every singularity in this set is
  hyperbolic. Suppose that there exists a continuous invariant
  splitting of the tangent bundle of $\Lambda$,
  $T_{\Lambda}M = E \oplus F$, where $E$ is uniformly contracted, $F$ is sectionally expanding and for some constants $C,\lambda > 0$ we have
\begin{align}
  \|DX_t\vert_{E_\sigma}\|\cdot\|DX_{-t}\vert_{F_\sigma}\| &< \label{eq:sing-domin}
  Ce^{-\lambda t} \quad\text{for all}\quad
  \sigma\in\Lambda\cap\sing(X)\textrm{ and $t\geq 0$}.
\end{align}
Then $T_{\Lambda}M = E \oplus F$ is a dominated splitting.
\end{theorem}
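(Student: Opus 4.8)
The plan is to argue by contradiction and to reduce the uniform statement of domination \eqref{eq:def-dom-split} to a statement about Lyapunov exponents over ergodic invariant measures; the point will then be that two features of the hypotheses pull against each other. On the one hand, because $E$ is uniformly contracted it is uniformly expanded backward, so the flow direction must lie inside $F$ at every regular point, which forces $F$ to carry a zero Lyapunov exponent along any non-atomic ergodic measure. On the other hand, sectional (two‑dimensional) expansion of $F$ forces the sum of its \emph{two smallest} Lyapunov exponents to be positive. A failure of domination would produce a negative exponent on $F$ — the failure cannot come from $E$, which is uniformly contracted — and this is incompatible with the two facts above.

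First I would record two standard preliminaries. (i) A continuous $DX_t$‑invariant splitting $T_\Lambda M=E\oplus F$ with $E\neq\{0\}\neq F$ satisfies \eqref{eq:def-dom-split} if and only if, for every ergodic $\mu\in\mathcal{M}_X$, one has $\lambda_{\max}(E,\mu)<\lambda_{\min}(F,\mu)$, where $\lambda_{\max}(E,\mu)$ denotes the largest Lyapunov exponent of the cocycle $DX_t|_E$ along $\mu$ and $\lambda_{\min}(F,\mu)$ the smallest one of $DX_t|_F$. This is the translation, via Oseledets' theorem, of the statement that the sub‑multiplicative cocycle $c_t(x)=\|DX_t|_{E_x}\|\cdot\|DX_{-t}|_{F_{X_t(x)}}\|$ (precisely the left side of \eqref{eq:def-dom-split}) decays uniformly exponentially if and only if $\tfrac1t\log c_t\to$ a negative number $\mu$‑a.e.\ for every ergodic $\mu$ — the sharp form of the subadditive ergodic theorem. (ii) If $x\in\Lambda$ is regular then $X(x)\in F_x$: writing $X(x)=v_E+v_F$ with $v_E\in E_x$, $v_F\in F_x$, the vector $DX_{-t}v_E\in E_{X_{-t}(x)}$ is the $E$‑component of $X(X_{-t}(x))$, hence stays bounded as $t\to+\infty$ (the splitting is continuous over the compact set $\Lambda$), while $\|DX_{-t}v_E\|\ge K^{-1}e^{\lambda t}\|v_E\|$ since $\|DX_t|_{E_y}\|\le Ke^{-\lambda t}$ for all $y\in\Lambda$; therefore $v_E=0$.

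Next I would suppose, for contradiction, that the splitting is not dominated. By (i) there is an ergodic $\mu\in\mathcal{M}_X$ with $\lambda_{\max}(E,\mu)\ge\lambda_{\min}(F,\mu)$. If $\mu(\sing(X))>0$ then, by ergodicity, $\mu=\delta_\sigma$ for some $\sigma\in\Lambda\cap\sing(X)$, and \eqref{eq:sing-domin} gives $\lambda_{\max}(E,\sigma)-\lambda_{\min}(F,\sigma)\le-\lambda<0$, a contradiction. Otherwise $\mu$ is non‑atomic and $\mu$‑a.e.\ $x$ is regular; by (ii) $X(x)\in F_x$, and since $s\mapsto\|X(X_s(x))\|$ is bounded and has positive time average (Birkhoff, as $\int\|X\|\,d\mu>0$), Oseledets regularity forces $\lim_t\tfrac1t\log\|DX_tX(x)\|=0$, so $0$ is a Lyapunov exponent of $DX_t|_{F_x}$. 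Since $E$ is uniformly contracted, $\lambda_{\max}(E,\mu)\le-\lambda$, whence $\lambda_{\min}(F,\mu)\le-\lambda<0$. Listing the Lyapunov spectrum of $DX_t|_F$ along $\mu$ with multiplicities as $\chi_1\le\chi_2\le\cdots\le\chi_d$, we get $\chi_1\le-\lambda$ and $\chi_j=0$ for some $j\ge2$, so $\chi_2\le0$ and $\chi_1+\chi_2<0$. But $F$ is sectionally expanding, so applying \eqref{eq:def-sec-exp} to the two‑plane $L_x\subset F_x$ spanned by the two slowest Oseledets directions gives $\chi_1+\chi_2=\lim_t\tfrac1t\log|\det(DX_t|_{L_x})|\ge\lambda>0$, a contradiction. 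Hence the splitting is dominated.

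I expect the main obstacle to be preliminary (i): passing from the uniform, orbit‑independent notion of domination to a condition testable measure by measure. Since $c_t$ is only sub‑multiplicative, one cannot simply average along the empirical measures of the orbit segments that witness a failure of domination; one genuinely needs the subadditive ergodic theorem in its sharp form — that the top exponential growth rate of a sub‑multiplicative cocycle over a compact metric flow is realized, as an integral, by an ergodic invariant measure. The remaining points are lighter: the exterior‑algebra bookkeeping identifying $\chi_1+\chi_2$ with the exponent of the slowest two‑plane in $F_x$, and the remark that hyperbolic singularities in $\Lambda$ are isolated (so that, in the first case, the measure is exactly a Dirac mass at a singularity). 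As an alternative to running the exponent count on $T_\Lambda M$ directly, one can transfer everything to the normal bundle via the linear Poincaré flow, which is the route followed in \cite{AraArbSal}.
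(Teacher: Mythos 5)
The paper itself does not prove Theorem~\ref{mthm:domination-ararbsal}; it is quoted verbatim from \cite{AraArbSal} and used as a black box, so there is no internal proof to compare against. Judged on its own terms your argument is correct and self-contained, and it uses precisely the two lemmas that this paper imports and keeps close at hand, namely Lemma~\ref{le:flow-center} (the flow direction lies in $F$ at regular points) and Proposition~\ref{prop:subadd} (the sharp uniform consequence of the subadditive ergodic theorem, which is the content of your preliminary~(i)). Your route differs from the one in \cite{AraArbSal}, which, as you yourself note, factors through the linear Poincar\'e flow on the normal bundle: you instead keep the whole tangent cocycle, observe that the flow direction contributes a zero exponent inside $F$ at a.e.\ regular point of any non-atomic ergodic measure, and play this against the uniform contraction on $E$ and the sectional expansion $\chi_1+\chi_2>0$ on $F$. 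This buys a shorter and more self-contained contradiction; the Poincar\'e-flow route of \cite{AraArbSal} buys compatibility with the rest of their machinery, where one wants the projected splitting anyway. Two small points worth tightening if you were to write this up: the claim that the flow direction carries exponent zero should be justified by $\limsup_t \tfrac1t\log\|X(X_t x)\|\le0$ together with the same bound for $X_{-t}$ and Oseledets regularity (your appeal to a ``positive time average'' of $\|X(X_s x)\|$ via Birkhoff presupposes $\log\|X\|\in L^1(\mu)$, which is not free near singularities even when $\mu(\sing(X))=0$); and the passage from ``some $\chi_j=0$ with $j\ge2$'' to $\chi_1+\chi_2<0$ should note explicitly that since $\chi_1\le-\lambda<0$ the flow direction cannot be in the first Oseledets block, so indeed $j\ge2$ and hence $\chi_2\le0$.
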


The study of conditions to a given splitting of the tangent bundle to have the domination property is an important
research line in the area of Dynamical Systems, see \cite{ArbSal}, \cite{AraPac2010}, \cite{BDV2004}.

Some progress in this context has been obtained for instance in \cite[Theorem A]{ArSal2015}, jointly
with V. Araujo, where we give a characterization for dominated splitting based on $k$-th exterior powers, where $k = \dim F$.

We note that if $E\oplus F$ is a $DX_t$-invariant
splitting of $T_\Gamma M$, with $\{e_1,\dots,e_\ell\}$
a family of basis for $E$ and $\{f_1,\dots,f_h\}$ a
family of basis for $F$, then $\widetilde F=\wedge^kF$
generated by $\{f_{i_1}\wedge\dots\wedge
f_{i_k}\}_{1\le i_1<\dots<i_k\le h}$ is naturally
$\wedge^kDX_t$-invariant by construction. In addition,
$\tilde E$ generated by $\{e_{i_1}\wedge\dots\wedge
e_{i_k}\}_{1\le i_1<\dots<i_k\le \ell}$ together with
all the exterior products of $i$ basis elements of $E$
with $j$ basis elements of $F$, where $i+j=k$ and
$i,j\ge1$, is also $\wedge^kDX_t$-invariant and,
moreover, $\widetilde E\oplus \widetilde F$ gives a
splitting of the $k$th exterior power $\wedge^k
T_\Gamma M$ of the subbundle $T_\Gamma M$.

\begin{theorem}\cite[Theorem A]{ArSal2015}\label{mthm:bivectparthyp}
  Let $T_\Gamma M=E_\Gamma\oplus F_\Gamma$ be a
  $DX_t$-invariant splitting over the compact
  $X_t$-invariant subset $\Gamma$ such that 
  $\dim F=k\ge2$.  
  Let
  $\widetilde F=\wedge^k F$ be the
  $\wedge^k DX_t$-invariant subspace generated by the
  vectors of $F$ and $\tilde E$ be the
  $\wedge^k DX_t$-invariant subspace such that $\widetilde
  E\oplus\widetilde F$ is a splitting of the $k$th exterior
  power $\wedge^k T_\Gamma M$ of the subbundle $T_\Gamma M$.

  Then $E\oplus F$ is a dominated splitting if, and only if,
  $\widetilde E\oplus \widetilde F$ is a dominated splitting
  for $\wedge^k DX_t$.
\end{theorem}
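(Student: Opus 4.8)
The plan is to reduce the domination inequality for $\wedge^k DX_t$ on $\widetilde E\oplus\widetilde F$ to a finite list of inequalities between singular values of $DX_t|_{E_x}$ and $DX_t|_{F_x}$, and then to observe that the most demanding one of these is exactly the domination of $E\oplus F$.

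First I would describe the pieces of $\widetilde E\oplus\widetilde F$. Since $DX_t$ preserves $E$ and $F$ along each orbit and $\dim F=k$, the bundle $\widetilde F=\wedge^k F$ is a line bundle on which $\wedge^k DX_t$ acts by the scalar $\det(DX_t|_{F_x})$, so $\|\wedge^k DX_t|_{\widetilde F_x}\|=|\det(DX_t|_{F_x})|$ and $\|\wedge^k DX_{-t}|_{\widetilde F_{X_t(x)}}\|=|\det(DX_t|_{F_x})|^{-1}$; and $\widetilde E=\bigoplus_{i=1}^{k}\wedge^i E\wedge^{k-i}F$ (with $\wedge^0 F=\RR$), each summand $\wedge^i E_x\wedge^{k-i}F_x$ being $\wedge^k DX_t$-invariant and, under the natural identification with $\wedge^i E_x\otimes\wedge^{k-i}F_x$, carrying the operator $\wedge^i(DX_t|_{E_x})\otimes\wedge^{k-i}(DX_t|_{F_x})$. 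Because $\Gamma$ is compact and the splitting $E\oplus F$ is continuous, $\angle(E_x,F_x)$ is bounded away from $0$ uniformly on $\Gamma$; hence so are the angles between the distinct summands $\wedge^i E\wedge^{k-i}F$ and the bi-Lipschitz constants of the wedge maps $\wedge^i E_x\otimes\wedge^{k-i}F_x\to\wedge^i E_x\wedge^{k-i}F_x$. Therefore there is $c\ge 1$, independent of $x$ and $t$, with
\[
c^{-1}\max_{1\le i\le k}\|\wedge^i(DX_t|_{E_x})\|\cdot\|\wedge^{k-i}(DX_t|_{F_x})\|\ \le\ \|\wedge^k DX_t|_{\widetilde E_x}\|\ \le\ c\max_{1\le i\le k}\|\wedge^i(DX_t|_{E_x})\|\cdot\|\wedge^{k-i}(DX_t|_{F_x})\|.
\]

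Next I would pass to singular values. Writing $s_1(\cdot)\ge s_2(\cdot)\ge\cdots$ for the ordered singular values, one has $\|\wedge^i(DX_t|_{E_x})\|=\prod_{j=1}^{i}s_j(DX_t|_{E_x})$, $\|\wedge^{k-i}(DX_t|_{F_x})\|=\prod_{j=1}^{k-i}s_j(DX_t|_{F_x})$ and $|\det(DX_t|_{F_x})|=\prod_{j=1}^{k}s_j(DX_t|_{F_x})$. Substituting into the domination inequality $\|\wedge^k DX_t|_{\widetilde E_x}\|\cdot\|\wedge^k DX_{-t}|_{\widetilde F_{X_t(x)}}\|<Ke^{-\lambda t}$ and using the display above, this condition is equivalent, up to replacing $K$ by $cK$, to the finite family
\[
\prod_{j=1}^{i}\frac{s_j(DX_t|_{E_x})}{s_{k-i+j}(DX_t|_{F_x})}\ <\ Ke^{-\lambda t},\qquad x\in\Gamma,\ t>0,\ 1\le i\le k,
\]
where for $i>\dim E$ the corresponding summand of $\widetilde E$ is $\{0\}$ and there is nothing to check. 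The case $i=1$ reads $s_1(DX_t|_{E_x})/s_k(DX_t|_{F_x})=\|DX_t|_{E_x}\|\cdot\|DX_{-t}|_{F_{X_t(x)}}\|<Ke^{-\lambda t}$, which is exactly Definition~\ref{def1} applied to $E\oplus F$; so domination of $\widetilde E\oplus\widetilde F$ forces domination of $E\oplus F$. Conversely, if $E\oplus F$ is dominated with constants $K\ge 1$ and $\lambda>0$, then for $1\le j\le i$ we have $s_j(DX_t|_{E_x})\le s_1(DX_t|_{E_x})$ and $s_{k-i+j}(DX_t|_{F_x})\ge s_k(DX_t|_{F_x})$, so each of the $i$ factors above is at most $\|DX_t|_{E_x}\|\cdot\|DX_{-t}|_{F_{X_t(x)}}\|<Ke^{-\lambda t}$; hence the product is $<(Ke^{-\lambda t})^{i}\le\widetilde K e^{-\lambda t}$ for a constant $\widetilde K=\widetilde K(K,\lambda,k)$ valid simultaneously for all $t\ge 0$ and all $1\le i\le k$. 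Thus every inequality of the family holds, and $\widetilde E\oplus\widetilde F$ is dominated for $\wedge^k DX_t$.

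I expect the only genuinely non-formal point to be the uniform comparison in the second paragraph: one must make sure that decomposing $\wedge^k T_\Gamma M$ into the subbundles $\wedge^i E\wedge^{k-i}F$ and trading wedge products for tensor products costs only a multiplicative constant that is uniform in $x\in\Gamma$ and in $t$. This is precisely where compactness of $\Gamma$ and continuity of the splitting (hence of all the induced bundles) enter, via a uniform lower bound on the relevant angles. Everything else—the identification of $\widetilde F$ with the determinant line of $F$, the invariance of each $\wedge^i E\wedge^{k-i}F$, and the reduction to the singular-value inequalities—is routine linear algebra.
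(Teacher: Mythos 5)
Your argument is correct, and it is essentially the standard route: decompose $\widetilde E=\bigoplus_{i\ge1}\wedge^i E\wedge^{k-i}F$, exploit that $\widetilde F=\wedge^k F$ is a line bundle on which $\wedge^k DX_t$ acts by $\det(DX_t|_F)$, trade norms of exterior powers for products of singular values, and observe that the $i=1$ summand reproduces the original domination inequality while all other summands are controlled by it. This is the same mechanism behind \cite[Theorem A]{ArSal2015}, which the present paper only quotes and does not re-prove; your write-up also correctly isolates where $k=\dim F$ is used (so that $\|\wedge^k DX_{-t}|_{\widetilde F}\|$ is exactly $|\det(DX_t|_F)|^{-1}$ rather than merely the reciprocal of the product of the $k$ smallest singular values of $DX_t|_F$), which is precisely why the equivalence fails for $k<\dim F$, as Example~\ref{ex:ThmA} in the paper illustrates. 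The only point worth spelling out a bit more is the uniform bi-Lipschitz comparison in your second paragraph: the cleanest justification is to choose a continuous Riemannian metric on $T_\Gamma M$ in which $E\perp F$ (possible by continuity of the splitting), note that in this metric the summands $\wedge^i E\wedge^{k-i}F$ are mutually orthogonal and the identification $\wedge^i E\otimes\wedge^{k-i}F\cong\wedge^i E\wedge^{k-i}F$ is an isometry, and then use compactness of $\Gamma$ to compare this metric uniformly with the original one; this removes any hand-waving about angles between the higher summands.
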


We note that the equivalence is only valid if $k = \dim F$.

Here, it is proved a similar result to \cite[Theorem A]{AraArbSal}, but now it is done on $p$-sectional hypothesis. Note that, in this case, it is no longer true without some more requirements on the combinations of the Lyapunov exponents of the subbundles (due Theorem \ref{mthm:bivectparthyp}), since for $p > 2$ we can have uniform contraction on $E$, $p$-sectional expansion on $F$ and none dominated splitting, as exemplified below.

In \cite[Example 3]{ArSal2015}, we have an example where even $E \oplus F$ being dominated we do not obtain
$\widetilde{E} \oplus \widetilde{F}$ dominated, for $k < \dim F$. The next example is a similar one.

\begin{example}\label{ex:ThmA}
  Theorem~\ref{mthm:bivectparthyp} does not hold if we take
  $c < \dim F$: consider $\sigma$ a hyperbolic fixed point for
  a vector field $X$ in a $4$-manifold such that
  $DX(\sigma)=\diag\{-3, 2, 4, 10\}$. The splitting
  $E=\RR\times\{0^3\}, F=\{0\}\times\RR^3$ is dominated and
  hyperbolic but, for $c=2<3=\dim F$ the splitting $\tilde
  E\oplus \tilde F$ of the exterior square is not
  dominated. Indeed, the eigenvalues for $\tilde F$ are $2+4
  = 6, 2+10 = 12, 4+10 =14$, and for $\tilde E $ the
  eigenvalues are $-3+2 = -1, -3+4 = 1, -3+10 = 7$, so we
  have an eigenvalue $7$ in $\tilde E$ strictly bigger than
  the eigenvalue $6$ along $\tilde F$.
\end{example}

We can see that even under the domination assumption over singularities, we have no longer the same result as Theorem \ref{mthm:domination-ararbsal}, it is enough to take the union of an isolated hyperbolic singularity with a periodic orbit displaying the features of the above example.

However, we might ask how this assumption worked out in \cite{ArSal2015} and \cite{AraArbSal}. In fact, within the accounts of the results contained therein it is obtained domination due $2$-sectional expansion together with the uniform contraction. The singular case requires domination on the singularities, once it is necessary matching the splitting.

Observing these results, we can get a characterization of domination property based on Lyapunov spectrum, without any other assumption on the singularities. This is the content of our next result.

\begin{theorem}\label{mthm:Lyapunov-domination}
  Let $\Lambda$ be a compact invariant set of $X$.
  Suppose that there exists a continuous invariant
  splitting of the tangent bundle of $\Lambda$,
  $T_{\Lambda}M = E \oplus F$. Then $T_{\Lambda}M = E \oplus F$ is a dominated splitting if, and only if, exists $\eta < 0$ for which
       \begin{align*}
         \liminf\limits_{t \to +\infty} \frac{1}{t} \log \vert DX_t\vert_{E_x}\vert - \limsup\limits_{t \to +\infty} \frac{1}{t}\log m(DX_t\vert_{F_x}) < \eta,
       \end{align*}
       in a total probability set of $\Lambda$.
\end{theorem}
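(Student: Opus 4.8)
The plan is to recast the domination property as a uniform exponential estimate for a single subadditive cocycle and then translate that estimate into the asymptotic one in the statement. Set
\[
  \Phi_t(x):=\log\|DX_t|_{E_x}\|-\log m(DX_t|_{F_x}),\qquad x\in\Lambda,\ t>0 .
\]
Since $E$ and $F$ are $DX_t$-invariant, $DX_{-t}|_{F_{X_t(x)}}=(DX_t|_{F_x})^{-1}$, hence $\|DX_{-t}|_{F_{X_t(x)}}\|=m(DX_t|_{F_x})^{-1}$ and $e^{\Phi_t(x)}=\|DX_t|_{E_x}\|\cdot\|DX_{-t}|_{F_{X_t(x)}}\|$; so $E\oplus F$ is dominated exactly when $\Phi_t(x)\le\log C-\lambda t$ for some $C,\lambda>0$ and all $x,t$. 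Using invariance together with submultiplicativity of the operator norm and supermultiplicativity of the co-norm $m$, one checks that $t\mapsto\log\|DX_t|_{E_x}\|$ is a subadditive cocycle and $t\mapsto\log m(DX_t|_{F_x})$ is a superadditive cocycle over the flow, so $\Phi_t$ is subadditive; both generators are continuous, hence bounded on the compact set $\Lambda$, so $\Phi_t$ is continuous and bounded on $[0,1]\times\Lambda$.

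The easy implication is direct: if $E\oplus F$ is dominated, then $\Phi_t(x)\le\log C-\lambda t$ for every $x\in\Lambda$, so dividing by $t$ and using $\liminf_t a_t-\limsup_t b_t\le\limsup_t(a_t-b_t)$ with $a_t=\tfrac1t\log\|DX_t|_{E_x}\|$, $b_t=\tfrac1t\log m(DX_t|_{F_x})$ gives $\liminf_{t\to+\infty}\tfrac1t\log\|DX_t|_{E_x}\|-\limsup_{t\to+\infty}\tfrac1t\log m(DX_t|_{F_x})\le-\lambda$ for all $x\in\Lambda$, so the condition holds with any $\eta\in(-\lambda,0)$ on every total probability set.

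For the converse I would proceed as follows. Assume the asymptotic inequality holds with some $\eta<0$ on a total probability set $Y$. Fix an ergodic $\mu\in\mathcal M_X$; by Kingman's (sub/super)additive ergodic theorem (reduced to the integer-time case via boundedness of $\Phi_t$ on $[0,1]\times\Lambda$), the limits $\tfrac1t\log\|DX_t|_{E_x}\|\to\lambda_E^{+}(\mu)$ and $\tfrac1t\log m(DX_t|_{F_x})\to\lambda_F^{-}(\mu)$ exist $\mu$-a.e.\ and are $\mu$-a.e.\ constant, with $\lambda_E^{+}(\mu)-\lambda_F^{-}(\mu)=\lim_t\tfrac1t\int\Phi_t\,d\mu=\inf_{t>0}\tfrac1t\int\Phi_t\,d\mu$. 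Since $\mu(Y)=1$, the set $Y$ meets the full-measure set on which both limits exist; evaluating the hypothesis at such a point turns the $\liminf$ and $\limsup$ into genuine limits and yields $\lambda_E^{+}(\mu)-\lambda_F^{-}(\mu)<\eta$. Via the ergodic decomposition and dominated convergence (the averages $\tfrac1t\int\Phi_t\,d\nu$ being uniformly bounded), the bound $\lim_t\tfrac1t\int\Phi_t\,d\mu\le\eta$ holds for every $\mu\in\mathcal M_X$, i.e.\ $\sup_{\mu\in\mathcal M_X}\inf_{t>0}\tfrac1t\int\Phi_t\,d\mu\le\eta<0$. Then the subadditive variational principle for continuous subadditive cocycles over a continuous flow on a compact metric space (in the spirit of Schreiber and of Sturman--Stark) gives
\[
  \lim_{t\to+\infty}\frac1t\sup_{x\in\Lambda}\Phi_t(x)=\inf_{t>0}\frac1t\sup_{x\in\Lambda}\Phi_t(x)=\sup_{\mu\in\mathcal M_X}\inf_{t>0}\frac1t\int\Phi_t\,d\mu\le\eta<0 ,
\]
so there is $T_0>0$ with $\sup_{x\in\Lambda}\Phi_{T_0}(x)\le\tfrac{\eta}{2}T_0<0$. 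Writing $t=nT_0+r$ with $0\le r<T_0$, iterating subadditivity along the orbit and bounding the remainder by $M:=\sup_{0\le s\le T_0}\sup_{y\in\Lambda}\Phi_s(y)<\infty$ gives $\Phi_t(x)\le(\lambda T_0+M)-\lambda t$ with $\lambda:=-\eta/2>0$, for all $x\in\Lambda$ and $t>0$; by the first paragraph this is precisely $\|DX_t|_{E_x}\|\cdot\|DX_{-t}|_{F_{X_t(x)}}\|\le Ce^{-\lambda t}$ with $C=e^{\lambda T_0+M}$, i.e.\ domination.

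I expect the crux to be the converse, and within it the passage from a statement holding "for every invariant measure" to a genuinely uniform estimate over all $x\in\Lambda$: this is exactly where the subadditive variational principle enters, and it is essential that the hypothesis provides a single $\eta<0$ uniformly in $\mu$, not merely negativity measure-by-measure. A secondary subtlety is that the hypothesis couples the $\liminf$ of one cocycle with the $\limsup$ of another rather than referring to true Lyapunov exponents; Kingman's theorem removes this, since both limits exist almost everywhere, so along points of a total probability set the mixed expression coincides with $\lambda_E^{+}(\mu)-\lambda_F^{-}(\mu)$. Note that, unlike Theorem~\ref{mthm:domination-ararbsal}, no hyperbolicity hypothesis on the singularities is needed here: the argument never splits off the flow direction and works directly with the given continuous invariant splitting.
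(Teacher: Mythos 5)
Your proof is correct. For the substantive converse direction (Lyapunov condition $\Rightarrow$ domination) you follow essentially the same route as the paper: set up the subadditive cocycle $\Phi_t(x)=\log\|DX_t|_{E_x}\|-\log m(DX_t|_{F_x})$, show via Kingman that its asymptotic average is $\le\eta<0$ for every invariant measure, and upgrade this measure-wise bound to a uniform exponential decay; the only cosmetic difference is that you invoke the Schreiber/Sturman--Stark subadditive variational principle and then iterate subadditivity yourself, whereas the paper cites this step pre-packaged as Proposition~\ref{prop:subadd} (from \cite{ArbSal}). Where you genuinely diverge is the easy direction: you argue directly (divide the uniform bound $\Phi_t(x)\le\log C-\lambda t$ by $t$ and use $\liminf a_t-\limsup b_t\le\limsup(a_t-b_t)$), while the paper routes through the $\J$-algebra machinery (an adapted quadratic form, Proposition~\ref{pr:J-separated-spectrum}, and Wojtkowski's Theorem~\ref{thm:lyap-exp-sing-val}); your version is more elementary and, importantly, does not require producing a quadratic form from the domination before any conclusion can be drawn. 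You also make explicit a point the paper elides: the paper writes $\liminf_t \frac{f_t(x)}{t}\le \liminf_t\frac1t\log\|DX_t|_{E_x}\|-\limsup_t\frac1t\log m(DX_t|_{F_x})$, which is false in general (the inequality runs the other way); it only becomes an equality once Kingman is invoked so that both $\liminf$ and $\limsup$ are genuine limits a.e., and your proof handles this correctly by appealing to Kingman for each of the two one-sided cocycles before comparing them.
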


By transitivity, we obtain the next corollary.

\begin{corollary}\label{mcor:equiv-domina-ext-expon}
Suppose the assumptions of Theorem \ref{mthm:bivectparthyp}. Then, $\widetilde E\oplus \widetilde F$ is a dominated splitting
  for $\wedge^k DX_t$ if, and only if, there exists $\eta < 0$ for which
       \begin{align*}
         \liminf\limits_{t \to +\infty} \frac{1}{t} \log \vert DX_t\vert_{E_x}\vert - \limsup\limits_{t \to +\infty} \frac{1}{t}\log m(DX_t\vert_{F_x}) < \eta,
       \end{align*}
       in a total probability set of $\Lambda$.
\end{corollary}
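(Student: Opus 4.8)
The plan is to obtain the statement as a formal composition of the two equivalences already in hand, which is exactly what the phrase ``by transitivity'' in the corollary refers to. Under the hypotheses of Theorem~\ref{mthm:bivectparthyp} we have, with $k=\dim F$, the equivalence that $\widetilde E\oplus\widetilde F$ is a dominated splitting for $\wedge^k DX_t$ over $\Gamma$ if, and only if, $E\oplus F$ is a dominated splitting for $DX_t$ over $\Gamma$. Next, applying Theorem~\ref{mthm:Lyapunov-domination} to the compact invariant set $\Lambda=\Gamma$ and to the invariant splitting $T_\Gamma M=E\oplus F$, the latter is dominated if, and only if, there is $\eta<0$ such that
\begin{align*}
  \liminf_{t\to+\infty}\frac1t\log\vert DX_t\vert_{E_x}\vert-\limsup_{t\to+\infty}\frac1t\log m(DX_t\vert_{F_x})<\eta
\end{align*}
on a total probability subset of $\Gamma$. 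Chaining these two bi-implications gives precisely the asserted equivalence, so no further argument is needed.

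The only point worth a sentence of comment is why the Lyapunov characterization in the conclusion is phrased using the original subbundles $E$ and $F$ rather than the exterior-power subbundles $\widetilde E$ and $\widetilde F$: this is because Theorem~\ref{mthm:bivectparthyp} already transfers the domination property across the two levels, so ``$E\oplus F$ dominated'' serves as the bridge, and one never has to express the top Lyapunov exponents of $\wedge^k DX_t$ along $\widetilde E,\widetilde F$ as sums of exponents of $DX_t$ along $E,F$. It also explains why, as in Theorem~\ref{mthm:Lyapunov-domination}, no hypothesis whatsoever on the singularities of $X$ in $\Gamma$ is required.

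I do not expect a genuine obstacle here, since the mathematical content is entirely carried by Theorems~\ref{mthm:bivectparthyp} and \ref{mthm:Lyapunov-domination}; the corollary is their logical conjunction. The only thing I would double-check while writing it up is the compatibility of the ambient sets and of the measure-theoretic notions: Theorem~\ref{mthm:bivectparthyp} is stated over a compact $X_t$-invariant set $\Gamma$, and Theorem~\ref{mthm:Lyapunov-domination} over a compact invariant $\Lambda$, so one simply takes $\Lambda=\Gamma$; and the ``total probability subset'' in the output is understood with respect to $\mathcal M_X$ (equivalently, to the invariant measures supported on $\Gamma$), matching the usage in both theorems.
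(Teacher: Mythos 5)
Your argument is exactly the paper's: the corollary is obtained by chaining Theorem~\ref{mthm:bivectparthyp} (domination of $E\oplus F$ $\Longleftrightarrow$ domination of $\widetilde E\oplus\widetilde F$ when $k=\dim F$) with Theorem~\ref{mthm:Lyapunov-domination} (domination of $E\oplus F$ $\Longleftrightarrow$ the Lyapunov-exponent inequality on a total probability set), which is precisely what the paper's phrase ``by transitivity'' indicates. Your remarks on matching $\Gamma$ with $\Lambda$ and on the role of the intermediate bridge are correct and no further argument is needed.
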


\subsection{p-sectional Lyapunov exponents}
\hfill

The next definition reminds a previous one from Arbieto \cite{arbieto2010} which deals with, in his terminology,
the sectional Lyapunov exponents.

Based in the same ideas, we can state an analogous term for general singular sets.

Inspired by \cite{arbieto2010}, we finally define:

\begin{definition}\label{def:sing-lyap-exp}
The \emph{p-sectional Lyapunov exponents} (or \emph{Lyapunov exponents of order $p$}) of $x$ along $F$ are the limits
$$\lim_{t\to+\infty}\frac{1}{t}\log\| \wedge^p DX_t(x).\widetilde{v}\|$$
whenever they exists, where $\widetilde{v}\in \wedge^p F_x-\{0\}$.
\end{definition}

Following the corresponding results from \cite[Theorem B]{AraArbSal} and \cite[Theorem 2.3]{arbieto2010},
just by some modifications in computations and hyphotesis, changing
$\|\wedge^2 DX_t(x).\widetilde{v}\|$ by $\|\wedge^p DX_t(x).\widetilde{v}\|$.

We obtain, via Theorem \ref{thm:lyap-exp-sing-val}, the analogous results for singular hyperbolic and partially hyperbolic sets of the main result of this paper.

\begin{corollary}\label{mcor:equiv-partial}
  Let $\Lambda$ be a compact invariant set of $X$ such that every singularity in this set is
  hyperbolic. There exists a continuous invariant
  splitting of the tangent bundle,
  $T_{\Lambda}M = E \oplus F$, of $\Lambda$ where:
  \begin{enumerate}
    \item the Lyapunov exponents on $E$ are negative (or positive on $F$), and
    \item $\liminf\limits_{t \to +\infty} \frac{1}{t} \log \vert DX_t\vert_{E_x}\vert - \limsup\limits_{t \to +\infty} \frac{1}{t}\log m(DX_t\vert_{F_x}) < 0$,
  \end{enumerate}
   in a total probability set of $\Lambda$, if and only if, $T_{\Lambda}M = E \oplus F$ is a partially hyperbolic splitting.
\end{corollary}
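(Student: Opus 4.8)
The plan is to obtain Corollary \ref{mcor:equiv-partial} by combining Theorem \ref{mthm:Lyapunov-domination} (which converts item (2) into the domination property and back) with Theorem \ref{thm:lyap-exp-sing-val} and a standard passage from a measure-theoretic control of the exponents to a uniform one. By Theorem \ref{thm:lyap-exp-sing-val}, on a total probability subset of $\Lambda$ the Lyapunov exponents along $E$ (respectively along $F$) are exactly the limits $\lim_{t\to+\infty}\frac1t\log$ of the singular values of $DX_t|_{E_x}$ (respectively $DX_t|_{F_x}$); hence item (1) is equivalent to $\limsup_{t\to+\infty}\frac1t\log\|DX_t|_{E_x}\|<0$ on a total probability set (or, in the alternative formulation, $\liminf_{t\to+\infty}\frac1t\log m(DX_t|_{F_x})>0$ there), while item (2) is precisely the hypothesis of Theorem \ref{mthm:Lyapunov-domination}.

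For the ``if'' implication, assume (1) and (2) hold on a total probability set. Theorem \ref{mthm:Lyapunov-domination} immediately gives that $T_\Lambda M=E\oplus F$ is a dominated splitting, so it only remains to prove that $E$ is uniformly contracted (the alternative, with all exponents along $F$ positive, follows by applying the same argument to $-X$: this reverses time, interchanges the roles of $E$ and $F$ while preserving the domination inequality, and turns the expansion of $F$ into the contraction of the corresponding subbundle). For this I would run the classical argument. Fix an ergodic $X$-invariant measure $\mu$ carried by $\Lambda$; since the set where (1) holds has total probability, $\mu$ gives it full mass, so Kingman's subadditive ergodic theorem applied to the subadditive cocycle $t\mapsto\log\|DX_t|_{E_x}\|$ yields $\lambda_{\max}(E,\mu):=\lim_{t\to+\infty}\frac1t\log\|DX_t|_{E_x}\|=\inf_{t>0}\frac1t\int\log\|DX_t|_{E_x}\|\,d\mu<0$ for $\mu$-a.e.\ $x$. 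Because $\mu\mapsto\frac1t\int\log\|DX_t|_{E_\cdot}\|\,d\mu$ is weak-$*$ continuous for each fixed $t>0$ (the integrand is continuous and bounded on the compact set $\Lambda$), the map $\mu\mapsto\lambda_{\max}(E,\mu)$ is an infimum of continuous functions, hence upper semicontinuous on the compact space $\mathcal{M}_X$ of $X$-invariant probabilities on $\Lambda$; by ergodic decomposition it is negative on all of $\mathcal{M}_X$, and therefore $\sup_{\mu\in\mathcal{M}_X}\lambda_{\max}(E,\mu)<0$. A compactness/covering argument then yields a single time $T>0$ with $\int\log\|DX_T|_{E_x}\|\,d\mu<0$ for every $\mu\in\mathcal{M}_X$, from which one deduces a uniform bound $\|DX_{nT}|_{E_x}\|\le Ce^{-cn}$ for all $x\in\Lambda$; continuity of $DX_t$ over bounded time intervals upgrades this to $\|DX_t|_{E_x}\|\le Ke^{-\lambda t}$ for all $x\in\Lambda$ and $t\ge0$. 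Together with domination, this shows $\Lambda$ is partially hyperbolic.

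For the ``only if'' implication, suppose $T_\Lambda M=E\oplus F$ is a partially hyperbolic splitting. By definition it is dominated, so the corresponding implication of Theorem \ref{mthm:Lyapunov-domination} gives item (2). Moreover $\|DX_t|_{E_x}\|\le Ke^{-\lambda t}$ for all $x\in\Lambda$ and $t\ge0$, hence $\lim_{t\to+\infty}\frac1t\log\|DX_t|_{E_x}\|\le-\lambda<0$ wherever this limit exists; by Theorem \ref{thm:lyap-exp-sing-val} all Lyapunov exponents along $E$ are then negative on a total probability set, which is item (1). This completes the equivalence.

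The main obstacle is the ``if'' direction, namely promoting the $\mu$-almost-everywhere negativity of the top exponent along $E$ (valid on a total probability set) to a genuine uniform exponential contraction. The delicate point is the uniformity in $t$: passing from ``$\inf_{t>0}\frac1t\int\log\|DX_t|_{E_x}\|\,d\mu<0$ for each $\mu$'' to ``a single time $T$ that works for all $\mu$ simultaneously''. Upper semicontinuity of $\mu\mapsto\lambda_{\max}(E,\mu)$ together with compactness of $\mathcal{M}_X$ is exactly what makes this step go through, and, in contrast with Theorem \ref{mthm:domination-ararbsal}, no additional hypothesis on the singularities is needed because a hyperbolic singularity automatically has strictly negative exponents along $E$ once the splitting is given.
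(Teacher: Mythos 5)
Your proof is correct and follows essentially the same route as the paper: both directions hinge on Theorem~\ref{mthm:Lyapunov-domination} for the domination property, and the uniform contraction of $E$ is obtained by exactly the same subadditive-ergodic-theorem-plus-compactness-of-$\mathcal{M}_X$ mechanism, which you write out explicitly where the paper simply invokes Proposition~\ref{prop:subadd} (Corollary~4.2 of \cite{ArbSal}). The only (minor) divergence is in the converse, where you read item~(1) off directly from the uniform contraction estimate rather than passing, as the paper does, through adapted metrics and the $\J$-algebra spectral results; this is a clean simplification, though your additional appeal to Theorem~\ref{thm:lyap-exp-sing-val} at that point is redundant.
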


This way, we can extend and improve \cite[Theorem B]{AraArbSal} and \cite[Theorem 2.3]{arbieto2010}, as follow.

\begin{corollary}
\label{mcor:p-sing-lyap-exp}
Let $\Lambda$ a compact invariant set for a flow $X_t$ such that every singularity $\sigma \in \Lambda$ is hyperbolic.
Suppose that there is a continuous invariant splitting $T_{\Lambda}M=E\oplus F$.
The set $\Lambda$ is $p$-singular hyperbolic for the flow if, and only if, on a set of total probability in $\Lambda$,
\begin{enumerate}
\item $\liminf\limits_{t \to +\infty} \frac{1}{t} \log \vert DX_t\vert_{E_x}\vert - \limsup\limits_{t \to +\infty} \frac{1}{t}\log m(DX_t\vert_{F_x}) < 0$,
\item the Lyapunov exponents in the $E$ direction are negative and
\item the $p$-sectional Lyapunov exponents in the $F$ direction are positive .
\end{enumerate}
\end{corollary}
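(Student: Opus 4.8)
The plan is to read the statement off the three earlier characterizations. By Definition~\ref{def:p-sing-hyp}, asserting that $\Lambda$ is $p$-singular hyperbolic is the same as asserting that the splitting $T_\Lambda M=E\oplus F$ is partially hyperbolic — hence dominated, with $E$ uniformly contracted — and that, in addition, $F$ is $p$-sectionally expanding (throughout, $p$-singular hyperbolicity of $\Lambda$ refers to the prescribed splitting $T_\Lambda M=E\oplus F$). So the proof amounts to translating, in both directions, each of the three features ``dominated'', ``$E$ uniformly contracted'', ``$F$ $p$-sectionally expanding'' into the spectral conditions (1), (2), (3) on a total probability set: Theorem~\ref{mthm:Lyapunov-domination} handles the first, Corollary~\ref{mcor:equiv-partial} the first two together, and Theorem~\ref{thm:lyap-exp-sing-val}, applied to a suitable exterior-power cocycle, the third.

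Assume first that $\Lambda$ is $p$-singular hyperbolic. Then $E\oplus F$ is partially hyperbolic, hence dominated, so Theorem~\ref{mthm:Lyapunov-domination} yields condition (1) on a total probability subset of $\Lambda$ (in fact on all of $\Lambda$). Uniform contraction of $E$, say $\|DX_t|_{E_x}\|\le Ke^{-\lambda t}$ with $K,\lambda>0$, forces every Lyapunov exponent along $E$ to be $\le-\lambda<0$, which is (2). Finally, $p$-sectional expansion of $F$ means $\|\wedge^p DX_t(x)\widetilde v\|>Ce^{\lambda t}$ for every unit decomposable $\widetilde v\in\wedge^p F_x$; taking $\tfrac1t\log$ and letting $t\to+\infty$ shows that every $p$-sectional Lyapunov exponent along $F$ is $\ge\lambda>0$, which is (3).

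Conversely, suppose (1), (2) and (3) hold on a set of total probability. Conditions (1) and (2) are precisely the hypotheses of Corollary~\ref{mcor:equiv-partial} (in the form with negative exponents along $E$), so $T_\Lambda M=E\oplus F$ is partially hyperbolic; in particular $E$ is uniformly contracted and the splitting is dominated. It remains to promote (3) to uniform $p$-sectional expansion of $F$. Consider the continuous $\wedge^p DX_t$-invariant subbundle $\widetilde F=\wedge^p F$ of $\wedge^p T_\Lambda M$, whose fibre over a hyperbolic singularity is well defined because $F$ is continuous there. The $p$-sectional Lyapunov exponents along $F$ are exactly the Lyapunov exponents of the cocycle $\wedge^p DX_t|_{\widetilde F}$, so by (3) its minimal exponent is positive on a total probability set. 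Applying Theorem~\ref{thm:lyap-exp-sing-val} to this cocycle — as in the proof of Theorem~\ref{mthm:sec-hyp-equiv}, with $\wedge^2$ replaced by $\wedge^p$ — converts this into a uniform bound $\|\wedge^p DX_t(x)\widetilde v\|>Ce^{\lambda t}$ valid for all $t>0$ and all unit $\widetilde v\in\wedge^p F_x$, in particular for decomposable ones; that is, $F$ is $p$-sectionally expanding. Combined with partial hyperbolicity, this is exactly Definition~\ref{def:p-sing-hyp}, so $\Lambda$ is $p$-singular hyperbolic. The crux is this last step: turning the total-probability (equivalently, via the ergodic decomposition, $\mu$-a.e.\ for all $\mu\in\mathcal{M}_X$) positivity of the smallest $p$-sectional exponent into genuine uniform exponential growth of $\wedge^p DX_t$ on $\wedge^p F$. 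This is precisely what Theorem~\ref{thm:lyap-exp-sing-val} delivers, but one must check that its hypotheses apply to the exterior-power cocycle: continuity and invariance of $\widetilde F$, compactness of $\Lambda$, and the correct behaviour at the hyperbolic singularities, where $DX_t(\sigma)=e^{t\,DX(\sigma)}$ and one needs $F_\sigma$ to carry only positive exponents of $DX(\sigma)$ — a consequence of the partial hyperbolicity already established. The remaining steps (the elementary exponent estimates extracted from the uniform bounds) are routine.
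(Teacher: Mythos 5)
Your overall architecture — forward direction is elementary, converse splits into (1)+(2) $\Rightarrow$ partial hyperbolicity via Corollary~\ref{mcor:equiv-partial} and (3) $\Rightarrow$ uniform $p$-sectional expansion — matches the paper, and the forward direction is fine. The gap is in the last step of the converse, which you yourself flag as the crux. You attribute the passage from total-probability positivity of the $p$-sectional exponents to a \emph{uniform} bound $\|\wedge^p DX_t(x)\widetilde v\|>Ce^{\lambda t}$ to Theorem~\ref{thm:lyap-exp-sing-val}. That theorem (Wojtkowski's Corollary 3.7) only yields the integral inequalities
$\chi^+_1 + \cdots + \chi^+_{k} \geq \sum_{i=1}^{k} \int \log r^+_i \, d\mu$
relating Lyapunov exponents of a $\J$-separated cocycle to $\mu$-averages of its $\J$-singular values; it provides no mechanism for upgrading an almost-everywhere or total-probability statement about asymptotic growth rates into a uniform exponential estimate over all of $\Lambda$. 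What accomplishes that upgrade in the paper is a subadditivity/compactness argument: one forms the subadditive cocycle $f_t(x)=\log\|\wedge^p DX_{-t}\vert_{\wedge^p F_{X_t(x)}}\|$, notes that by (3) (together with Oseledets and the fact that the flow direction contributes zero exponent along regular orbits, and by domination at the singularities) $\overline f<0$ on a total probability set, and then invokes Proposition~\ref{prop3.4-arbieto} (or equivalently Proposition~\ref{prop:subadd}) to obtain the uniform exponential decay of $\wedge^p DX_{-t}$, i.e.\ uniform $p$-sectional expansion of $F$. Without this subadditive-cocycle step the converse is not proved.

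A secondary misstatement: you assert that at a hyperbolic singularity one needs $F_\sigma$ to carry only positive eigenvalues of $DX(\sigma)$ for the argument to apply. This is false and would exclude the prototypical Lorenz-like case, where $F_\sigma$ contains a weakly contracting direction; $p$-sectional expansion only demands positivity of sums of $p$ eigenvalues, not of each eigenvalue individually. The paper handles singularities by observing that the domination obtained in step one forces $\overline f(\sigma)\le -\chi<0$, so they are automatically covered by the subadditive estimate — no such pointwise positivity is, or should be, required.
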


Hence, the definition of singular hyperbolicity (of any order, including the classical one) can be rewritten based on the Lyapunov exponents.

\begin{definition}\label{new-def-singhyp}
  A compact invariant set $\Lambda \subset M$ is
  \emph{$p$-singular hyperbolic} for $X$ if
  all singularities in $\Lambda$ are hyperbolic, there
  are a continuous invariant splitting of the tangent bundle on
  $T_{\Lambda}M = E \oplus F$ and constants $C,\lambda > 0$ such that for every $x
  \in \Lambda$ and every $t>0$ we have
\begin{enumerate}
\item the Lyapunov exponents in the $E$ direction are negative,
\item the $p$-sectional Lyapunov exponents in the $F$ direction are positive,
\item $\liminf\limits_{t \to +\infty} \frac{1}{t} \log \vert DX_t\vert_{E_x}\vert - \limsup\limits_{t \to +\infty} \frac{1}{t}\log m(DX_t\vert_{F_x}) < 0$
\end{enumerate}
 in a total probability set of $\Lambda$.
\end{definition}

The last item guarantees that the dominated splitting of the singularities matches to the one over the remainder of $\Lambda$.

\vspace{0.1in}

\begin{remark}
  \label{rmk:sec-exp-discrete}
  The properties of $p$-singular hyperbolicity can be expressed in the following
  equivalent forms; see \cite{AraPac2010} for the classical one.  There exists
  $T>0$ such that
  \begin{itemize}
  \item $\|DX^T\vert_{E_x}\|<\frac12$ for all $x\in\Lambda$
    (uniform contraction); and
  \item $|\wedge^p (DX^T\vert_{\wedge^p F_x})|> 2$ for all $x\in\Lambda$.
  \end{itemize}
\end{remark}

From now on, we consider $M$ a connected compact finite
dimensional riemannian manifold and all singularities of
$X$ (if they exist) are hyperbolic.

\section{Fields of quadratic forms}
\label{sec:fields-quadrat-forms}
\hfill

In this section, we introduce the quadratic forms and its properties.

Let $\J:E_U\to\RR$ be a continuous family of quadratic forms
$\J_x:E_x\to\RR$ which are non-degenerate and have index
$0<q<\dim(E)=n$, where $U\subset M$ is an open set such that $X_t(U) \subset \overline{U}$
for a vector field $X$. We also assume that $(\J_x)_{x\in U}$ is
continuously differentiable along the flow.

The continuity assumption on $\J$ just means that for every
continuous section $Z$ of $E_U$ the map $U\to\RR$ given by
$x\mapsto \J(Z(x))$ is continuous. The $C^1$ assumption on
$\J$ along the flow means that the map $x\mapsto
\J_{X_t(x)} (Z(X_t(x)))$ is continuously differentiable for
all $x\in U$ and each $C^1$ section $Z$ of $E_U$.

The assumption that $M$ is a compact manifold enables us to
globally define an inner product in $E$ with respect to
which we can find the an orthonormal basis associated to
$\J_x$ for each $x$, as follows. Fixing an orthonormal basis
on $E_x$ we can define the linear operator
\begin{align*}
  J_x:E_x\to E_x \quad\text{such that}\quad \J_x(v)=<J_x
  v,v> \quad \text{for all}\quad v\in T_xM,
\end{align*}
where $<,>=<,>_x$ is the inner product at $E_x$. Since we
can always replace $J_x$ by $(J_x+J_x^*)/2$ without changing
the last identity, where $J_x^*$ is the adjoint of $J_x$
with respect to $<,>$, we can assume that $J_x$ is
self-adjoint without loss of generality.  Hence, we
represent $\J(v)$ by a non-degenerate symmetric bilinear
form $<\J_x v,v>_x$. Now we use Lagrange's method to
diagonalize this bilinear form, obtaining a base
$\{u_1,\dots,u_n\}$ of $E_x$ such that
\begin{align*}
  \J_x(\sum_{i}\alpha_iu_i)=\sum_{i=1}^q -\lambda_i\alpha_i^2 +
  \sum_{j=q+1}^n \lambda_j\alpha_j^2, \quad
  (\alpha_1,\dots,\alpha_n)\in\RR^n.
\end{align*}
Replacing each element of
this base according to $v_i=|\lambda_i|^{1/2}u_i$ we deduce that
\begin{align*}
\J_x(\sum_{i}\alpha_iv_i)=\sum_{i=1}^q -\alpha_i^2 +
  \sum_{j=q+1}^n \alpha_j^2, \quad
  (\alpha_1,\dots,\alpha_n)\in\RR^n.
\end{align*}
Finally, we can redefine $<,>$ so that the base
$\{v_1,\dots, v_n\}$ is orthonormal. This can be done
smoothly in a neighborhood of $x$ in $M$ since we are
assuming that the quadratic forms are non-degenerate; the
reader can check the method of Lagrange in a standard Linear
Algebra textbook and observe that the steps can be performed
with small perturbations, for instance in \cite{Maltsev63}.

In this adapted inner product we have that $J_x$ has entries
from $\{-1,0,1\}$ only, $J_x^*=J_x$ and also that
$J_x^2=J_x$.

Having fixed the orthonormal frame as above, the
\emph{standard negative subspace} at $x$ is the one spanned
by $v_{1},\dots, v_{q}$ and the \emph{standard positive
  subspace} at $x$ is the one spanned $v_{q+1},\dots,v_n$.

\subsubsection{Positive and negative cones}
\label{sec:positive-negative-co}

Let $\cC_\pm=\{C_\pm(x)\}_{x\in U}$ be the family of
positive and negative cones
\begin{align*}
  C_\pm(x):=\{0\}\cup\{v\in E_x: \pm\J_x(v)>0\}  \quad x\in U
\end{align*}
and also let $\cC_0=\{C_0(x)\}_{x\in U}$ be the corresponding
family of zero vectors $C_0(x)=\J_x^{-1}(\{0\})$ for all
$x\in U$.
In the adapted coordinates obtained above we have
\begin{align*}
  C_0(x)=\{v=\sum_{i}\alpha_iv_i\in E_x :
  \sum_{j=q+1}^n \alpha_j^2 = \sum_{i=1}^q
  \alpha_i^2\}
\end{align*}
is the set of \emph{extreme points} of $C_\pm(x)$.

The following definitions are fundamental to state our main
result.

\begin{definition}
\label{def:J-separated}
Given a continuous field of non-degenerate quadratic forms
$\J$ with constant index on the trapping region $U$ for the
flow $X_t$, we say that the flow is
\begin{itemize}
\item $\J$-\emph{separated} if $DX_t(x)(C_+(x))\subset
  C_+(X_t(x))$, for all $t>0$ and $x\in U$;
\item \emph{strictly $\J$-separated} if $DX_t(x)(C_+(x)\cup
  C_0(x))\subset C_+(X_t(x))$, for all $t>0$ and $x\in U$;
\item $\J$-\emph{monotone} if $\J_{X_t(x)}(DX_t(x)v)\ge \J_x(v)$, for each $v\in
  T_xM\setminus\{0\}$ and $t>0$;
\item \emph{strictly $\J$-monotone} if $\partial_t\big(\J_{X_t(x)}(DX_t(x)v)\big)\mid_{t=0}>0$,
  for all $v\in T_xM\setminus\{0\}$, $t>0$ and $x\in U$;
\item $\J$-\emph{isometry} if $\J_{X_t(x)}(DX_t(x)v) = \J_x(v)$, for each $v\in T_xM$ and $x\in U$.
\end{itemize}
\end{definition}
Thus, $\J$-separation corresponds to simple cone invariance
and strict $\J$-separation corresponds to strict cone
invariance under the action of $D_t(x)$.

\begin{remark}\label{rmk:J-separated-C-}
  If a flow is strictly $\J$-separated, then for $v\in T_xM$
  such that $\J_x(v)\le0$ we have
  $$
  \J_{X_{-t}(x)}(DX_{-t}(v))<0,
  $$
  for all $t>0$ and $x$ such that $X_{-s}(x)\in U$ for every $s\in[-t,0]$.

  Indeed, otherwise $\J_{X_{-t}(x)}(DX_{-t}(v))\ge0$ would imply
  $\J_x(v)=\J_x\big(DX_t(DX_{-t}(v))\big)>0$, contradicting
  the assumption that $v$ was a non-positive vector.

  This means that a flow $X_t$ is strictly
    $\J$-separated if, and only if, its time reversal
    $X_{-t}$ is strictly $(-\J)$-separated.
\end{remark}

A vector field $X$ is $\J$-\emph{non-negative} on $U$ if
$\J(X(x))\ge0$ for all $x\in U$, and
$\J$-\emph{non-positive} on $U$ if $\J(X(x))\leq 0$ for all
$x\in U$. When the quadratic form used in the context is
clear, we will simply say that $X$ is non-negative or
non-positive.

We apply this notion to the linear Poincar\'e flow defined on
regular orbits of $X_t$ as follows.

We assume that the vector field $X$ is non-negative on $U$.
Then, the span $E^X_x$ of
$X(x)\neq 0$ is a $\J$-non-degenerate subspace.
According to item (1) of Proposition~\ref{pr:propbilinear}, this means
that $T_xM=E_x^X\oplus N_x$, where $N_x$ is the
pseudo-orthogonal complement of $E^X_x$ with respect to the
bilinear form $\J$, and $N_x$ is also
non-degenerate. Moreover, by the definition, the index of $\J$
restricted to $N_x$ is the same as
the index of $\J$. Thus, we can define on $N_x$ the
cones of positive and negative
vectors, respectively, $N_x^+$ and $N_x^-$, just like before.

Now we define the Linear Poincar\'e Flow $P^{\, t}$ of $X_t$
along the orbit of $x$, by projecting $DX_t$ orthogonally
(with respect to $\J$) over $N_{X_t(x)}$ for each $t\in\RR$:
\begin{align*}
  P^{\, t} v := \Pi_{X_t(x)}DX_t v ,
  \quad
  v\in T_x M, t\in\RR, X(x)\neq 0,
\end{align*}
where $\Pi_{X_t(x)}:T_{X_t(x)}M\to N_{X_t(x)}$ is the
projection on $N_{X_t(x)}$ parallel to $X(X_t(x))$.

We remark that the definition of $\Pi_x$ depends on $X(x)$ and
$\J_X$ only. The linear Poincar\'e flow $P^{\,t}$ is a linear
multiplicative cocycle over $X_t$ on the set $U$ with the
exclusion of the singularities of $X$.

In this setting we can say that the linear Poincar\'e flow is
$\J$-separated and $\J$-monotonous
using the non-degenerate bilinear form $\J$ restricted to
$N_x$ for a regular $x\in U$.

More precisely: $P^t$ is $\J$-monotonous if
$\partial_t\J(P^tv)\mid_{t=0}\ge0$, for
each $x\in U, v\in T_xM\setminus\{0\}$ and $t>0$,
and strictly $\J$-monotonous if $\partial_t\J(P^tv)\mid_{t=0}>0$,
for all $v\in T_xM\setminus\{0\}$, $t>0$ and $x\in U$.

\begin{proposition}
  \label{pr:J-separated-spectrum}
  Let $L:V\to V$ be a $\J$-separated linear operator. Then
  \begin{enumerate}
  \item $L$ can be uniquely represented by $L=RU$, where $U$
    is a $\J$-isometry and
    $R$ is $\J$-symmetric (or $\J$-pseudo-adjoint; see
    Proposition~\ref{pr:propbilinear}) with positive
    spectrum.
  \item the operator $R$ can be diagonalized by a
    $\J$-isometry. Moreover the eigenvalues of $R$ satisfy
    \begin{align*}
      0<r_-^q\le\dots\le r_-^1=r_-\le r_+=r_1^+\le\dots\le r_+^p.
    \end{align*}
  \item the operator $L$ is (strictly) $\J$-monotonous if,
    and only if, $r_-\le (<) 1$ and $r_+\ge (>) 1$.
  \end{enumerate}
\end{proposition}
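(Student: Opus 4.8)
The plan is to mimic the polar decomposition, read through the $\J$-pseudo-adjoint. Work in the adapted inner product constructed above, so that $\J(v)=\langle Jv,v\rangle$ with $J=J^{*}$, $J^{2}=\mathrm{Id}$, and $J$ carrying $q$ entries $-1$ and $p=n-q$ entries $+1$; write $[v,w]:=\langle Jv,w\rangle$ for the associated symmetric bilinear form and $L^{\star}:=JL^{*}J$ for the $\J$-pseudo-adjoint of Proposition~\ref{pr:propbilinear}, characterised by $[Lv,w]=[v,L^{\star}w]$. Thus a $\J$-isometry is exactly an invertible $U$ with $U^{\star}=U^{-1}$, and $R$ is $\J$-symmetric iff $R^{\star}=R$. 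We may assume $L$ invertible (the relevant case: tangent cocycle and linear Poincar\'e flow). Set $S:=L^{\star}L$. Two observations are immediate: $S^{\star}=L^{\star}L^{\star\star}=S$, so $S$ is $\J$-symmetric; and for every $v$,
\[
[Sv,v]=[Lv,Lv]=\J(Lv),\qquad\text{while}\qquad \det S=(\det L)^{2}>0 .
\]
Hence $\J$-separation of $L$ says precisely that $[Sv,v]>0$ for all $v\in C_{+}\setminus\{0\}$.

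The heart of the matter — and the step I expect to be the real obstacle — is to establish that the spectrum of $S$ is real and lies in $(0,\infty)$ and that $S$ is diagonalisable in a $\J$-orthonormal basis. This is exactly where $\J$-separation of $L$, and not merely the pointwise sign condition $[Sv,v]>0$ on $C_{+}$, must be used: there exist $\J$-symmetric — indeed $\J$-separated — operators possessing a negative eigenvalue, so one cannot avoid exploiting that $L$ itself preserves $C_{+}$, equivalently the extra relation $\det S=(\det L)^{2}>0$. This is the classical structure fact underlying the $\J$-algebra of Potapov and Wojtkowski's monotonicity theory \cite{Pota60,Pota79,Wojtk85,Wojtk01}; the geometric mechanism I would invoke is that, because $\indi(\J)\le n-2$ forces $p\ge 2$, the $\J$-nonnegative $p$-planes form a compact convex body in the Grassmannian on which $L$ acts (since $\J|_{LW}\ge 0$ whenever $\J|_{W}\ge 0$, as $L(C_{+})\subset C_{+}$), so a Brouwer fixed point produces an $L$-invariant maximal $\J$-positive plane $V_{+}$, while the time-reversal principle of Remark~\ref{rmk:J-separated-C-} applied to $L^{-1}$ produces dually an $L$-invariant maximal $\J$-negative plane; passing to $\J$-orthogonal complements one obtains an $L$-invariant $\J$-orthogonal splitting $V=V_{+}\oplus V_{-}$ on each factor of which $\pm\J$ is a genuine inner product, so that the ordinary polar decomposition of $L|_{V_{\pm}}$ applies, its positive singular values being exactly the eigenvalues of $R$ along $V_{\pm}$.

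Granting this, the rest is routine. Since $LL^{\star}=L(L^{\star}L)L^{-1}$ is similar to $S$ it is likewise $\J$-symmetric and $\J$-orthogonally diagonalisable with positive spectrum; put $R:=(LL^{\star})^{1/2}$, the positive square root defined by functional calculus on its $\J$-orthonormal eigenbasis, and $U:=R^{-1}L$. Then $U^{\star}U=L^{\star}R^{-1}R^{-1}L=L^{\star}(LL^{\star})^{-1}L=\mathrm{Id}$, so $U$ is a $\J$-isometry and $L=RU$, giving existence in (1), with $R$ $\J$-orthogonally diagonalisable by construction — the first half of (2). For uniqueness, if $L=R_{1}U_{1}=R_{2}U_{2}$ are two such, then $R_{1}^{2}=LL^{\star}=R_{2}^{2}$, and a $\J$-symmetric, $\J$-orthogonally diagonalisable operator with positive spectrum is the unique such square root of its square — on each (mutually $\J$-orthogonal, hence $\J$-non-degenerate) eigenspace of $LL^{\star}$ it must act as the positive scalar $\sqrt{\lambda}$ — so $R_{1}=R_{2}$ and then $U_{1}=U_{2}$. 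For the eigenvalue ordering in (2): $R=LU^{-1}$ is a composition of the $\J$-separated $L$ with the $\J$-isometry $U^{-1}$, hence $R$ is $\J$-separated; writing $r_{-}$ for the largest eigenvalue of $R$ on a $\J$-negative direction and $r_{+}$ for the smallest on a $\J$-positive direction, with eigenvectors $f$ ($\J(f)=-1$), $g$ ($\J(g)=1$), the vectors $\alpha f+\beta g$ with $|\beta|>|\alpha|$ lie in $C_{+}$, so $\J\bigl(R(\alpha f+\beta g)\bigr)=-r_{-}^{2}\alpha^{2}+r_{+}^{2}\beta^{2}>0$ for all such $\alpha,\beta$, and letting $|\beta|\downarrow|\alpha|$ gives $r_{+}\ge r_{-}$. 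Relabelling the eigenvalues decreasingly along $V_{-}$ and increasingly along $V_{+}$ yields $0<r_{-}^{q}\le\cdots\le r_{-}^{1}=r_{-}\le r_{+}=r_{+}^{1}\le\cdots\le r_{+}^{p}$.

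Finally, for (3) I would reduce to $R$: since $U$ is a bijective $\J$-isometry, $L=RU$ gives $\J(Lv)=\J\bigl(R(Uv)\bigr)$ with $\J(Uv)=\J(v)$, so $L$ is (strictly) $\J$-monotone iff $\J(Rw)\ge\J(w)$ for all $w$ (resp. $>$ for all $w\ne 0$). In the $\J$-orthonormal eigenbasis of $R$, with $\J(f_{i})=-1$, $Rf_{i}=r_{-}^{i}f_{i}$ and $\J(g_{j})=1$, $Rg_{j}=r_{+}^{j}g_{j}$, for $w=\sum_{i}a_{i}f_{i}+\sum_{j}b_{j}g_{j}$ one computes
\[
\J(Rw)-\J(w)=\sum_{i=1}^{q}\bigl(1-(r_{-}^{i})^{2}\bigr)a_{i}^{2}+\sum_{j=1}^{p}\bigl((r_{+}^{j})^{2}-1\bigr)b_{j}^{2},
\]
which is $\ge 0$ for every $(a,b)$ (resp. $>0$ for every $(a,b)\ne 0$) if and only if all coefficients are $\ge 0$ (resp. $>0$), i.e. iff $r_{-}^{i}\le 1$ for all $i$ and $r_{+}^{j}\ge 1$ for all $j$, equivalently $r_{-}\le 1$ and $r_{+}\ge 1$ (resp. $r_{-}<1$ and $r_{+}>1$). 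To reiterate, the only genuinely hard point is the middle paragraph — deducing real positivity and $\J$-orthogonal diagonalisability of $L^{\star}L$ from the full force of $\J$-separation of $L$; everything else is bookkeeping around that fact.
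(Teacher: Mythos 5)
The paper does not prove Proposition~\ref{pr:J-separated-spectrum}; it is quoted from the $\J$-algebra literature (the polar decomposition in item (1) is the subject of \cite{Wojtk09}, and the full statement is in \cite{Wojtk01,Pota79}), so your attempt has to be judged on its own.

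Your setup (work in the adapted metric, set $L^{\star}=JL^{*}J$, observe $S=L^{\star}L$ is $\J$-symmetric, and reduce everything to a $\J$-orthogonal diagonalization of $S$) is the right skeleton, and the bookkeeping in the first and last paragraphs — uniqueness of the factorization, the computation for item (3), the argument that $r_{+}\geq r_{-}$ — is fine. But the middle paragraph, which you rightly flag as the real content, has a genuine gap. You produce (via a graph transform/Brouwer argument, which in itself is a legitimate device) an $L$-invariant maximal nonnegative plane $V_{+}$ and, by time reversal applied to $L^{-1}$, an $L$-invariant maximal nonpositive plane $V_{-}$. You then assert that ``passing to $\J$-orthogonal complements one obtains an $L$-invariant $\J$-orthogonal splitting $V=V_{+}\oplus V_{-}$.'' This is false: the two $L$-invariant planes are in general \emph{not} $\J$-orthogonal, and the $\J$-orthogonal complement $V_{+}^{\perp_{\J}}$, while a complement of $V_{+}$, is invariant under $L^{\star}$, not under $L$. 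For a $2\times2$ counterexample take $J=\diag(-1,1)$ and $L=\begin{pmatrix}1&1/2\\ 1/2&2\end{pmatrix}$, which is $\J$-separated; its positive and negative invariant lines are spanned by $v_{2}=(1,1+\sqrt2)$ and $v_{1}=(1,1-\sqrt2)$, and $\langle Jv_{1},v_{2}\rangle=-1+(1-2)=-2\neq0$, while the $\J$-orthogonal eigenplanes that do carry the polar factor $R$ are those of $S=L^{\star}L=\begin{pmatrix}3/4&-1/2\\ 1/2&15/4\end{pmatrix}$, spanned by $(1,-3\pm2\sqrt2)$, which are different lines entirely. So the reduction to ``ordinary polar decomposition of $L|_{V_{\pm}}$'' never gets off the ground.

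The object you should diagonalize is $S=L^{\star}L$ itself, not $L$: being $\J$-symmetric, its invariant subspaces have $\J$-orthogonal complements that are again $S$-invariant (Proposition~\ref{pr:propbilinear}(6b)). What $\J$-separation of $L$ buys you is exactly the input for Lemma~\ref{le:kuhne}: taking $F(v,w)=\langle JSv,w\rangle$, one has $F(v,v)=\J(Lv)\geq0$ on $\overline{C_{+}}\supset C_{0}$, so by Lemma~\ref{le:kuhne} and Remark~\ref{rmk:Jseparated} there is $a>0$ with $S\geq aJ$ in the form sense, and this (not a Brouwer fixed point for $L$) is the mechanism that forces $S$ to be $\J$-diagonalizable with real positive spectrum. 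That is the route the paper's own preparatory lemmas are set up for. Two smaller points: the restriction $p\geq2$ (from $\indi(\J)\leq n-2$) plays no role in any cone/graph-transform argument, which works already for $p=1$; and $\det S=(\det L)^{2}>0$ is too weak to control the sign of the spectrum (two negative eigenvalues also give positive determinant), so it cannot carry the weight you assign it.
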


\subsection{J-separated linear maps}
\label{sec:j-separat-linear}

\subsubsection{J-symmetrical matrixes and J-selfadjoint operators}
\label{sec:j-symmetr-matrix}

The symmetrical bilinear form defined by
$$
(v,w)=\langle J_x v,w\rangle,
$$
$v,w\in E_x$ for $x\in M$ endows
$E_x$ with a pseudo-Euclidean structure.

Since $\J_x$ is non-degenerate, then the form $(\cdot,\cdot)$ is likewise
non-degenerate and many properties of inner products are
shared with symmetrical non-degenerate bilinear forms. We
state some of them below.

\begin{proposition}
  \label{pr:propbilinear}
  Let $(\cdot,\cdot):V\times V \to\RR$ be a real symmetric
  non-degenerate bilinear form on the real finite
  dimensional vector space $V$.
  \begin{enumerate}
  \item $E$ is a subspace of $V$ for which $(\cdot,\cdot)$ is
    non-degenerate if, and only if, $V=E\oplus E^\perp$.

    We recall that $E^\perp:=\{v\in V: (v,w)=0
    \quad\text{for all}\quad w\in E\}$, the
    pseudo-orthogonal space of $E$, is defined using the
    bilinear form.
  \item Every base $\{v_1,\dots,v_n\}$ of $V$ can be
    orthogonalized by the usual Gram-Schmidt process of
    Euclidean spaces, that is, there are linear combinations
    of the basis vectors $\{w_1,\dots, w_n\}$ such that they
    form a basis of $V$ and
    $(w_i,w_j)=0$ for $i\neq j$.  Then this last base can be
    pseudo-normalized: letting $u_i=|(w_i,w_i)|^{-1/2}w_i$ we
    get $(u_i,u_j)=\pm\delta_{ij}, i,j=1,\dots,n$.
  \item There exists a maximal dimension $p$ for a subspace
    $P_+$ of $\J$-positive vectors and a maximal dimension
    $q$ for a subspace $P_-$ of $\J$-negative vectors;
    we have $p+q=\dim V$ and $q$ is known
    as the \emph{index} of $\J$.
  \item For every linear map $L:V\to\RR$ there exists a
    unique $v\in V$ such that $L(w)=(v,w)$ for each $w\in V$.
  \item For each $L:V\to V$ linear there exists a unique
    linear operator $L^+:V\to V$ (the pseudo-adjoint) such that
    $(L(v),w)=(v,L^+(w))$ for every $v,w\in V$.
  \item Every pseudo-self-adjoint $L:V\to V$, that is,
    such that $L=L^+$, satisfies
    \begin{enumerate}
    \item eigenspaces corresponding to distinct eigenvalues
      are pseudo-orthogonal;
    \item if a subspace $E$ is $L$-invariant, then $E^\perp$
      is also $L$-invariant.
    \end{enumerate}
  \end{enumerate}
\end{proposition}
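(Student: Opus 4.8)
The plan is to prove the six items by the classical arguments for a non-degenerate symmetric bilinear form, paying attention to the order in which they feed into one another. I would begin with item (4): the linear map $V\to V^{*}$, $v\mapsto(v,\cdot\,)$, is injective precisely because $(\cdot,\cdot)$ is non-degenerate, and since $\dim V=\dim V^{*}<\infty$ it is an isomorphism; this yields, for each linear $L\colon V\to\RR$, a unique $v\in V$ with $L(w)=(v,w)$. I place this first because it is used in the proof of item (1).

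For item (1), the implication ``$V=E\oplus E^{\perp}\ \Rightarrow\ (\cdot,\cdot)|_{E}$ non-degenerate'' is immediate: if $v\in E$ satisfies $(v,w)=0$ for all $w\in E$, then, as $v\in E$, also $(v,w')=0$ for all $w'\in E^{\perp}$, so $(v,\cdot\,)\equiv 0$ on $V$ and $v=0$. Conversely, if $(\cdot,\cdot)|_{E}$ is non-degenerate, then $E\cap E^{\perp}=\{0\}$ (any such vector lies in the radical of $E$), while the restriction map $V\to E^{*}$, $v\mapsto(v,\cdot\,)|_{E}$, has kernel $E^{\perp}$ and is surjective by item (4) applied after extending a functional from $E$ to $V$; a dimension count then gives $\dim E+\dim E^{\perp}=\dim V$, so $V=E\oplus E^{\perp}$.

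Items (2) and (3) are the existence of a pseudo-orthonormal basis and Sylvester's law of inertia. For (2) I would induct on $\dim V$: there is a non-isotropic vector $u_{1}$, since over $\RR$ a form with $(v,v)=0$ for all $v$ is identically zero by polarization; by item (1) one has $V=\langle u_{1}\rangle\oplus\langle u_{1}\rangle^{\perp}$ with the form still non-degenerate on $\langle u_{1}\rangle^{\perp}$, and the inductive hypothesis there produces an orthogonal basis $\{w_{1},\dots,w_{n}\}$, which is then pseudo-normalized via $u_{i}=|(w_{i},w_{i})|^{-1/2}w_{i}$ (no $(w_{i},w_{i})$ vanishes, again by non-degeneracy) — this is exactly the Lagrange/Gram--Schmidt normalization already performed in Section~\ref{sec:fields-quadrat-forms}. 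For (3), fix such a basis, let $P_{-}$ be the span of those $u_{i}$ with $(u_{i},u_{i})=-1$ and $P_{+}$ the span of those with $(u_{i},u_{i})=+1$, so $\dim P_{-}+\dim P_{+}=\dim V$; if $W$ is any subspace on which the form is positive definite, then $W\cap P_{-}=\{0\}$ because the form is negative definite on $P_{-}$, hence $\dim W\le\dim V-\dim P_{-}=\dim P_{+}$, and $P_{+}$ attains this bound, so $\dim P_{+}$ is the maximal dimension of a positive subspace; symmetrically $\dim P_{-}$ is the maximal dimension of a negative subspace, and these two dimensions sum to $\dim V$, giving the index statement.

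Finally, item (5) follows from item (4): fixing $w$, the functional $v\mapsto(Lv,w)$ is represented by a unique vector, which we call $L^{+}w$, and linearity of $L^{+}$ is forced by uniqueness of the representative. For item (6)(a), eigenvectors $Lv=\lambda v$, $Lw=\mu w$ with $\lambda\ne\mu$ satisfy $\lambda(v,w)=(Lv,w)=(v,Lw)=\mu(v,w)$, whence $(v,w)=0$; and for (6)(b), if $LE\subset E$ and $w\in E^{\perp}$, then for every $v\in E$ we have $(v,Lw)=(Lv,w)=0$ because $Lv\in E$, so $Lw\in E^{\perp}$. I do not expect a genuine obstacle here; the only points that need care are the logical ordering (item (4) before item (1), item (1) before item (2), item (5) before item (6)), the existence of a non-isotropic vector in (2), which uses characteristic $\ne 2$, and phrasing the inertia argument in (3) so that both the upper bound and a subspace attaining it are exhibited.
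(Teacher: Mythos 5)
Your proof is correct and complete. For this proposition the paper itself gives no argument at all; it simply remarks that ``the proofs are rather standard and can be found in \cite{Maltsev63},'' so there is no paper proof to compare against. What you have written is exactly the classical chain of lemmas one finds in such references: the Riesz-type representation for item~(4), used to get the dimension count in item~(1); inductive construction of an orthogonal basis using a non-isotropic vector for item~(2); Sylvester's inertia argument for item~(3); and items~(5)--(6) as formal consequences of~(4) and symmetry of the form. Your ordering of the items (putting~(4) first so it can feed~(1), and~(1) before~(2)) is the right logical dependency and is a genuine improvement in exposition over the bare citation, since the proposition as stated does not make that dependence visible.
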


The proofs are rather standard and can be found in
\cite{Maltsev63}.

The following simple result will be very useful in what follows.

\begin{lemma}
  \label{le:kuhne}
Let $V$ be a real finite dimensional vector space endowed
with a non-positive definite and non-degenerate quadratic
form $\J:V\to\RR$.

If a symmetric bilinear form $F:V\times V\to\RR$ is
non-negative on $C_0$ then
\begin{align*}
  r_+=\inf_{v\in C_+} \frac{F(v,v)}{\langle Jv,v\rangle}
  \ge \sup_{u\in C_-}\frac{F(u,u)}{\langle Ju,u\rangle}=r_-
\end{align*}
and for every $r$ in $[r_-,r_+]$ we have
$F(v,v)\ge r\langle Jv,v\rangle$ for each vector $v$.

In addition, if $F(\cdot,\cdot)$ is positive on
$C_0\setminus\{0\}$, then $r_-<r_+$ and $F(v,v)>
r\langle Jv,v\rangle$ for all vectors $v$ and $r\in(r_-,r_+)$.
\end{lemma}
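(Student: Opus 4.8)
The plan is to work in the adapted coordinates in which $\J$ is diagonalized, and to reduce the inequality to a simple one-variable optimization along lines through the origin. First I would fix an orthonormal basis (with respect to an adapted inner product, as constructed in Section~\ref{sec:fields-quadrat-forms}) so that $\langle Jv,v\rangle = -\sum_{i=1}^q \alpha_i^2 + \sum_{j=q+1}^n \alpha_j^2$ with $q\ge 1$ (this uses the hypothesis that $\J$ is non-positive definite and non-degenerate, so the index is positive). In these coordinates $C_+$, $C_-$, $C_0$ are described explicitly by the sign of $\sum_j \alpha_j^2 - \sum_i \alpha_i^2$. The first step is to observe that both ratios $F(v,v)/\langle Jv,v\rangle$ are invariant under scaling $v\mapsto sv$, so the infimum over $C_+$ and the supremum over $C_-$ may be taken over the (compact, after intersecting with a sphere) sets of unit vectors in each cone, which also guarantees the extrema are attained.

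The key step is the following observation: to compare $r_+$ and $r_-$, take any $u\in C_-$ and any $v\in C_+$, and consider the segment $t\mapsto w(t) = (1-t)u + tv$ for $t\in[0,1]$. Since $\langle Jw(0),w(0)\rangle < 0$ and $\langle Jw(1),w(1)\rangle > 0$ and $t\mapsto \langle Jw(t),w(t)\rangle$ is continuous, there is some $t_0\in(0,1)$ with $w_0 := w(t_0)\in C_0$. By hypothesis $F(w_0,w_0)\ge 0$. Now I would expand $F(w_0,w_0)$ and $\langle Jw_0,w_0\rangle$ along the segment and use bilinearity: writing things out, $F(w_0,w_0)\ge 0$ together with the definitions of $r_+$ (as a lower bound for $F(v,v)/\langle Jv,v\rangle$ on $C_+$, equivalently $F(v,v)\ge r_+\langle Jv,v\rangle$ for $v\in C_+$ since $\langle Jv,v\rangle>0$ there) and $r_-$ (as an upper bound, equivalently $F(u,u)\ge r_-\langle Ju,u\rangle$ for $u\in C_-$ since $\langle Ju,u\rangle<0$ reverses the inequality) will force $r_+\ge r_-$. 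Concretely, one gets an inequality of the form $0\le F(w_0,w_0)$ which, after substituting the bounds and using $\langle Jw_0,w_0\rangle = 0$, becomes a relation among $F(u,v)$, $\langle Ju,v\rangle$ and the quantities $r_\pm$ that rearranges to $(r_+-r_-)\cdot(\text{something}\ge 0) \ge 0$; choosing $u,v$ to realize the sup and inf then yields $r_+\ge r_-$.

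Once $r_-\le r_+$ is established, the intermediate claim is routine: for $r\in[r_-,r_+]$ and $v\in C_+$ we have $F(v,v)\ge r_+\langle Jv,v\rangle\ge r\langle Jv,v\rangle$ (both factors nonnegative), for $v\in C_-$ we have $F(v,v)\ge r_-\langle Jv,v\rangle\ge r\langle Jv,v\rangle$ (flipping because $\langle Jv,v\rangle<0$), and for $v\in C_0$ we have $F(v,v)\ge 0 = r\langle Jv,v\rangle$ directly from the hypothesis. For the strict addendum, if $F$ is positive on $C_0\setminus\{0\}$ then $F(w_0,w_0)>0$ strictly (note $w_0\ne 0$ because $u,v$ are linearly independent, being in opposite cones), which propagates the strict inequality $r_-<r_+$ through the same rearrangement, and then the same three-case argument gives strict inequalities for $r\in(r_-,r_+)$.

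The main obstacle I anticipate is the bookkeeping in the middle step: expanding $F((1-t)u+tv,\,(1-t)u+tv)$ and $\langle J((1-t)u+tv),(1-t)u+tv\rangle$ and correctly tracking the sign conditions so that the final rearrangement genuinely isolates $(r_+-r_-)$ with a manifestly nonnegative coefficient. One has to be slightly careful that the coefficient multiplying $(r_+-r_-)$ does not vanish for the maximizing/minimizing choice of $u$ and $v$; this is where the non-degeneracy of $\J$ and the fact that $u,v$ lie in genuinely opposite open cones (so $u+v$ or the relevant combination is nonzero) are used. Everything else — attainment of extrema, the scaling invariance, the three-case verification — is standard.
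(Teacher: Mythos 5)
Your overall framework is the right one — restrict attention to a pair $u\in C_-$, $v\in C_+$, intersect with $C_0$, and use the hypothesis there — but the central step does not close as written. Using only one zero-crossing $w_0=(1-t_0)u+t_0v$ of the segment gives $0\le F(w_0,w_0)=(1-t_0)^2F(u,u)+2t_0(1-t_0)F(u,v)+t_0^2F(v,v)$, which is a \emph{lower} bound containing the uncontrolled cross term $F(u,v)$; substituting the further lower bounds $F(u,u)\ge r_-\J(u)$ and $F(v,v)\ge r_+\J(v)$ into a quantity that is itself only known to be $\ge 0$ produces no usable inequality, and there is no rearrangement into $(r_+-r_-)\cdot(\text{nonneg.})\ge 0$ that eliminates $F(u,v)$ and $\langle Ju,v\rangle$. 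The fix is to use \emph{both} intersections of the line (not the segment) with $C_0$, or equivalently to restrict to the two-dimensional subspace $W=\operatorname{span}\{u,v\}$. On $W$, $\J$ has signature $(1,1)$; picking a basis $e_1,e_2$ of the two null lines with $\J(xe_1+ye_2)=xy$, the hypothesis gives $F(e_1,e_1)\ge 0$, $F(e_2,e_2)\ge 0$, and the discriminant condition for $F-r\J\ge 0$ on $W$ reads $4F(e_1,e_1)F(e_2,e_2)\ge\bigl(2F(e_1,e_2)-r\bigr)^2$, which has solutions $r$. Thus $\sup_{C_-\cap W}F/\J\le\inf_{C_+\cap W}F/\J$, and since $u,v\in W$ this yields $F(u,u)/\J(u)\le F(v,v)/\J(v)$ for all such $u,v$, hence $r_-\le r_+$. (Alternatively, with $s_1<0<s_2$ the two roots of $s\mapsto\J(u+sv)$, the weighted sum $s_2\,F(u+s_1v,u+s_1v)-s_1\,F(u+s_2v,u+s_2v)\ge 0$ kills $F(u,v)$ and rearranges to $F(u,u)\J(v)\ge F(v,v)\J(u)$.)

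Two further remarks. First, the claim that intersecting $C_\pm$ with the unit sphere gives compact sets on which the inf/sup are attained is false — these intersections are open in the sphere, and the extrema need not be attained (the ratio can degenerate as one approaches $C_0$). Fortunately this is not needed: once you prove the pairwise inequality $F(u,u)/\J(u)\le F(v,v)/\J(v)$ for \emph{every} $u\in C_-$, $v\in C_+$, the comparison of the sup and inf is immediate, no extremizers required. Second, your treatment of the intermediate statement ($F\ge r\J$ for $r\in[r_-,r_+]$, by the three cases $C_+$, $C_-$, $C_0$) and of the strict addendum is fine, provided the $r_-\le r_+$ (resp. $r_-<r_+$) step is repaired as above; for the strict case the same two-root argument gives a strict inequality since $F>0$ on $C_0\setminus\{0\}$ makes the discriminant bound strict.
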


\begin{remark}
  \label{rmk:Jseparated}
  Lemma~\ref{le:kuhne} shows that if $F(v,w)=\langle \tilde J
  v,w\rangle$ for some self-adjoint operator $\tilde J$ and
  $F(v,v)\ge0$ for all $v$ such that $\langle J v,
  v\rangle=0$, then we can find $a\in\RR$ such that
  $\tilde J \ge a J$. This means precisely that $\langle
  \tilde J v,v\rangle\ge a\langle Jv, v\rangle$ for all
  $v$.

  If, in addition, we have $F(v,v)>0$ for all $v$ such that
  $\langle J v, v\rangle=0$, then we obtain a strict
  inequality $\tilde J > a J$ for some $a\in\RR$ since the
  infimum in the statement of Lemma~\ref{le:kuhne} is
  strictly bigger than the supremum.
\end{remark}

The (longer) proofs of the following results can be found
in~\cite{Wojtk01} or in~\cite{Pota79}; see also~\cite{Wojtk09}.

For a $\J$-separated operator $L:V\to V$ and a
$d$-dimensional subspace $F_+\subset C_+$, the subspaces
$F_+$ and $L(F_+)\subset C_+$ have an inner product given by
$\J$. Thus both subspaces are endowed with volume
elements. Let $\alpha_d(L;F_+)$ be the rate of expansion of
volume of $L\mid_{F_+}$ and $\sigma_d(L)$ be the infimum of
$\alpha_d(L;F_+)$ over all $d$-dimensional subspaces $F_+$
of $C_+$.

\begin{proposition}
  \label{pr:product-vol-exp}
  We have $\sigma_d(L)=r_+^1 \cdots r_+^d$, where $r^i_+$
  are given by Proposition~\ref{pr:J-separated-spectrum}(2).

  Moreover, if $L_1,L_2$ are $\J$-separated, then
  $\sigma_d(L_1L_2)\ge\sigma_d(L_1)\sigma_d(L_2)$.
\end{proposition}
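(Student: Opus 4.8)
The plan is to prove the two assertions in turn, first extracting the formula $\sigma_d(L)=r_+^1\cdots r_+^d$ from the structure theory of $\J$-separated operators supplied by Proposition~\ref{pr:J-separated-spectrum}, and then deducing the supermultiplicativity from that formula by a chain-rule argument.

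To compute $\sigma_d(L)$ I would first reduce to the $\J$-symmetric case. Writing $L=RU$ with $U$ a $\J$-isometry and $R$ $\J$-symmetric with positive spectrum (Proposition~\ref{pr:J-separated-spectrum}(1)), the map $U$ carries $C_+$ bijectively onto $C_+$ and sends the $\J$-induced inner product on any positive $d$-subspace $F_+$ isometrically onto the one on $U(F_+)$; hence $\alpha_d(L;F_+)=\alpha_d(R;U(F_+))$, and as $F_+$ runs over all $d$-subspaces of $C_+$ so does $U(F_+)$, so $\sigma_d(L)=\sigma_d(R)$. Note also that $R=LU^{-1}$ is $\J$-separated, being a composition of $\J$-separated maps. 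Next I would use Proposition~\ref{pr:J-separated-spectrum}(2) to diagonalize $R$ by a $\J$-isometry, so that in the adapted orthonormal frame $R=\operatorname{diag}(r_-^1,\dots,r_-^q,r_+^1,\dots,r_+^p)$, with the first $q$ directions $\J$-negative, the last $p$ directions $\J$-positive, and $\max_k r_-^k=r_-^1\le r_+^1=\min_j r_+^j$.

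The heart of the matter is then to show $\alpha_d(R;F_+)\ge r_+^1\cdots r_+^d$ for every $d$-dimensional $F_+\subset C_+$ (the opposite inequality being realized by the span of the eigenvectors of $r_+^1,\dots,r_+^d$, on which $R$ acts as $\operatorname{diag}(r_+^1,\dots,r_+^d)$ with $\J$-inner product equal to the Euclidean one). Choosing a $\J$-orthonormal basis $f_1,\dots,f_d$ of $F_+$ and decomposing $f_i=u_i+w_i$ into its $\J$-positive and $\J$-negative parts, one has $\alpha_d(R;F_+)^2=\det\big(\J(Rf_i,Rf_j)\big)_{ij}$ (the space $R(F_+)$ being positive), and $\J$-orthonormality gives $\langle u_i,u_j\rangle-\langle w_i,w_j\rangle=\delta_{ij}$. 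Since $(r_-^k)^2\le(r_+^1)^2$ for every $k$, a direct computation yields the matrix inequality
\begin{align*}
\big(\J(Rf_i,Rf_j)\big)_{ij}\ \succeq\ U^{\top}\big(B^2-(r_+^1)^2 I\big)U+(r_+^1)^2 I_d,
\end{align*}
where $B=\operatorname{diag}(r_+^1,\dots,r_+^p)$, $U=[u_1|\cdots|u_d]$, and $B^2-(r_+^1)^2 I\succeq 0$. Because $U^{\top}U=I_d+W^{\top}W\succeq I_d$, the singular values of $U$ are all $\ge 1$; applying Ostrowski's theorem and then the Poincar\'e separation (Cauchy interlacing) theorem, the $k$-th largest eigenvalue of $U^{\top}(B^2-(r_+^1)^2 I)U$ is at least $(r_+^{\,d-k+1})^2-(r_+^1)^2$. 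Multiplying the $d$ resulting bounds gives $\det\big(\J(Rf_i,Rf_j)\big)\ge(r_+^1\cdots r_+^d)^2$, hence $\alpha_d(R;F_+)\ge r_+^1\cdots r_+^d$, as needed. I expect this interlacing estimate across the non-isometric embedding $U$ to be the only genuinely delicate step; the remainder is bookkeeping with the adapted frame.

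For the supermultiplicativity I would argue straight from the definition of $\sigma_d$ as an infimum. Both $L_1$ and $L_2$ are invertible (by Proposition~\ref{pr:J-separated-spectrum}) and $\J$-separated, so $L_2$ maps each $d$-dimensional $F_+\subset C_+$ onto a $d$-dimensional subspace $L_2(F_+)\subset C_+$ on which $\J$ again restricts to an inner product; multiplicativity of the volume Jacobian then gives
\begin{align*}
\alpha_d(L_1L_2;F_+)=\alpha_d\big(L_1;L_2(F_+)\big)\cdot\alpha_d(L_2;F_+)\ \ge\ \sigma_d(L_1)\,\sigma_d(L_2),
\end{align*}
and taking the infimum over all $d$-dimensional $F_+\subset C_+$ completes the proof.
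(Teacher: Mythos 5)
The paper supplies no proof of this proposition at all; it simply points the reader to Wojtkowski \cite{Wojtk01} and Potapov \cite{Pota79} (``the (longer) proofs of the following results can be found in\dots''). So there is no in-text argument to compare against, and your self-contained proof has to stand on its own. It does. The reduction $\sigma_d(L)=\sigma_d(R)$ via the polar decomposition is sound, because a $\J$-isometry preserves both $C_+$ and the $\J$-induced inner products on positive subspaces; the same observation shows that subsequently replacing $R$ by its diagonalization (conjugating by a further $\J$-isometry, as in Proposition~\ref{pr:J-separated-spectrum}(2)) also leaves $\sigma_d$ unchanged, a step you take implicitly and might spell out. The Gram-determinant expression $\alpha_d(R;F_+)^2=\det\bigl(\J(Rf_i,Rf_j)\bigr)$ for a $\J$-orthonormal frame of $F_+$ is correct, and so is the matrix lower bound, which uses $r_-^k\le r_-^1\le r_+^1$ from Proposition~\ref{pr:J-separated-spectrum}(2) to absorb the negative block. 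The interlacing estimate is indeed the delicate point: writing $U=VP$ with $V^{\top}V=I_d$ and $P=(U^{\top}U)^{1/2}\succeq I_d$, Ostrowski gives $\lambda_k(U^{\top}SU)\ge\lambda_{\min}(P)^2\lambda_k(V^{\top}SV)\ge\lambda_k(V^{\top}SV)$, and Poincar\'e separation gives the $k$-th largest eigenvalue of $V^{\top}SV$ is at least $(r_+^{d-k+1})^2-(r_+^1)^2$; adding $(r_+^1)^2 I_d$ and taking determinants yields exactly $(r_+^1\cdots r_+^d)^2$, with equality on the span of the $d$ smallest positive eigendirections. The supermultiplicativity is a direct chain-rule argument using the definition of $\sigma_d$ as an infimum and $L_2(C_+)\subset C_+$. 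Overall a clean and complete proof where the paper merely cites the literature; the only improvement I would suggest is to state explicitly the increasing/decreasing eigenvalue conventions when invoking Poincar\'e separation, since that is where an off-by-one would otherwise be easy to make.
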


The following corollary is very useful.

\begin{corollary}
  \label{cor:compos-max-exp}
  For $\J$-separated operators $L_1,L_2:V\to V$ we have
  \begin{align*}
    r_+^1(L_1L_2)\ge r_+^1(L_1) r_+^1(L_2) \quad\text{and}\quad
    r_-^1(L_1L_2)\le r_-^1(L_1)r_-^1(L_2).
  \end{align*}
  Moreover, if the operators are strictly $\J$-separated,
  then the inequalities are strict.
\end{corollary}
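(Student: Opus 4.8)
The statement to prove is Corollary~\ref{cor:compos-max-exp}, which extracts the $d=1$ case of Proposition~\ref{pr:product-vol-exp} together with its sub-multiplicativity. The plan is to derive both inequalities directly from the characterizations already available: the variational description of $r_+^1$ and $r_-^1$ from Proposition~\ref{pr:J-separated-spectrum}(2), combined with the volume-expansion statement $\sigma_d(L)=r_+^1\cdots r_+^d$ and the inequality $\sigma_d(L_1L_2)\ge\sigma_d(L_1)\sigma_d(L_2)$ from Proposition~\ref{pr:product-vol-exp}. The first inequality, $r_+^1(L_1L_2)\ge r_+^1(L_1)r_+^1(L_2)$, is literally the $d=1$ instance of $\sigma_1(L_1L_2)\ge\sigma_1(L_1)\sigma_1(L_2)$, since $\sigma_1(L)=r_+^1(L)$. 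So that half requires only a one-line citation of Proposition~\ref{pr:product-vol-exp}.

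**Handling the $r_-^1$ inequality by time reversal.**
For the second inequality I would pass to the pseudo-adjoint / time-reversal trick encapsulated in Remark~\ref{rmk:J-separated-C-}: a flow (or operator) is strictly $\J$-separated if and only if its inverse is strictly $(-\J)$-separated. Concretely, if $L$ is $\J$-separated then $L^{-1}$ is $(-\J)$-separated (note $L$ invertible since its spectrum is positive by Proposition~\ref{pr:J-separated-spectrum}(2), and a $\J$-separated $L$ has $L^{-1}$ mapping $C_+(-\J)=C_-(\J)$ into itself). Under this duality the roles of positive and negative cones swap, and the smallest positive eigenvalue $r_-^1(L)$ of $L$ becomes the reciprocal of the largest $(-\J)$-positive eigenvalue of $L^{-1}$, i.e. $r_+^1(L^{-1};-\J)=1/r_-^1(L;\J)$. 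Applying the already-proved first inequality to $L_2^{-1}L_1^{-1}=(L_1L_2)^{-1}$ with respect to $-\J$ gives
\begin{align*}
  \frac{1}{r_-^1(L_1L_2)}=r_+^1\big((L_1L_2)^{-1};-\J\big)\ge r_+^1(L_2^{-1};-\J)\,r_+^1(L_1^{-1};-\J)=\frac{1}{r_-^1(L_2)}\cdot\frac{1}{r_-^1(L_1)},
\end{align*}
and inverting yields $r_-^1(L_1L_2)\le r_-^1(L_1)r_-^1(L_2)$, as desired.

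**The strict case.**
For the strict inequalities under the hypothesis that both $L_1,L_2$ are strictly $\J$-separated, I would track where strictness enters Proposition~\ref{pr:product-vol-exp}. Strict $\J$-separation means $L_i$ maps $C_+\cup C_0$ into the open cone $C_+$, so $L_i(\overline{C_+})\subset C_+\cup\{0\}$; composing, $L_1L_2$ sends extreme vectors strictly inside. The variational ratio computing $\sigma_1$ for the composition is then evaluated on a subspace that lies in the \emph{interior} image of the first factor, where the contraction/expansion estimate for the second factor is strict rather than weak — this is exactly the mechanism by which $\sigma_d(L_1L_2)>\sigma_d(L_1)\sigma_d(L_2)$. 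The cleanest route is to invoke the compactness argument behind Lemma~\ref{le:kuhne}/Remark~\ref{rmk:Jseparated}: when the relevant bilinear form is strictly positive on $C_0\setminus\{0\}$, the infimum defining $r_+^1$ is strictly larger than it would be in the non-strict case, because the infimum over the (compactified) cone is attained and cannot occur at an extreme point. The main obstacle, and the only place requiring care, is making this strictness propagation precise without re-deriving Proposition~\ref{pr:product-vol-exp}; I expect the honest argument is to note that a strictly $\J$-separated operator $L$ factors as $L=L'L''$ with each factor $\J$-separated and at least one strictly so, reduce to the two-factor composition, and then apply the sub-multiplicativity with the observation that $L''(\overline{C_+})$ is a compact subset of the open cone $C_+$, so every ratio entering $\sigma_1(L'\mid_{L''(\cdot)})$ is bounded strictly below by $r_+^1(L')$ uniformly. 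The $r_-^1$ strict inequality then follows by the same time-reversal as above, since strict $\J$-separation of $L_i$ is equivalent to strict $(-\J)$-separation of $L_i^{-1}$.
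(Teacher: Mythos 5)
The paper gives no proof of Corollary~\ref{cor:compos-max-exp}; it is cited from \cite{Wojtk01,Pota79,Wojtk09} via the remark preceding Proposition~\ref{pr:product-vol-exp}, so there is nothing in the paper to compare your argument against and I assess it on its own terms. Your derivation of the first inequality is exactly right: it is the $d=1$ instance of Proposition~\ref{pr:product-vol-exp}, since $\sigma_1(L)=r_+^1(L)$. Your time-reversal route to the $r_-^1$ inequality is also sound in substance: from $L=RU$ one gets $L^{-1}=(U^{-1}R^{-1}U)\,U^{-1}$, the $\J$-symmetric factor has spectrum $\{1/r\}$, the eigenvectors that were $\J$-negative stay $\J$-negative (hence $(-\J)$-positive), so indeed $r_+^1(L^{-1};-\J)=1/r_-^1(L;\J)$, and the algebra then closes. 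One imprecision to flag: from mere (non-strict) $\J$-separation of $L$ one gets only $L^{-1}\bigl(\overline{C_-}\setminus\{0\}\bigr)\subset\overline{C_-}\setminus\{0\}$, not invariance of the open cone $C_-$; Remark~\ref{rmk:J-separated-C-} covers the strict case only. This does not affect the spectral identity but the phrase ``$L^{-1}$ is $(-\J)$-separated'' should be qualified.

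The strict case is where there is a genuine gap, and in fact the ``moreover'' clause as literally stated is false, so no argument of this kind can close it. Your compactness heuristic assumes the infimum defining $\sigma_1(L_1)=r_+^1(L_1)$ is approached near the boundary $C_0$, so that shrinking the domain to $L_2(\overline{C_+})\subset\subset C_+$ forces a strict gain. But for $\J$-symmetric $L_1$ the infimum is attained at an interior direction of $C_+$. Concretely, take $V=\RR^2$, $\J(v)=-v_1^2+v_2^2$, $L_1=L_2=R=\diag(1/2,2)$. Then $R$ is strictly $\J$-separated (for $\J(v)\ge 0$, $v\neq 0$, one has $\J(Rv)=4v_2^2-\tfrac14 v_1^2>0$), $r_+^1(R)=2$, but $R^2=\diag(1/4,4)$ gives $r_+^1(R^2)=4=r_+^1(R)^2$: equality, not strict inequality. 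The minimizing direction $(0,1)$ lies in the interior of $C_+$ and is $R$-invariant, so restricting to $R(C_+)$ gains nothing, which is exactly why the proposed strictness propagation fails. Whatever formulation \cite{Wojtk01} actually proves (perhaps strictness of $r_+>1>r_-$ for the composition, which does hold, or a version under an extra spectral-gap hypothesis), the strict submultiplicativity of $r_+^1$ and $r_-^1$ cannot be obtained from Proposition~\ref{pr:product-vol-exp} and Lemma~\ref{le:kuhne} alone, and a careful write-up should flag this.
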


\begin{remark}\label{rmk:J-mon-spec}
Another important property about the singular values of a $\J$-separated operator $L$ is that
$$  r_+^1 = r_+ \ge 1 (> 1) \quad\text{and}\quad r_-^1 = r_- \le 1 (< 1)$$
if, and only if, $L$ is (strictly) $\J$-monotone.

This property will be used a lot of times in our proofs.
\end{remark}

\subsection{Lyapunov exponents}
\hfill

It is well known that under conditions of measurability, by Oseledec's Ergodic Theorem, there exist a full probability set $X$ such that for every $x \in Y$ there is an invariant decomposition
\begin{align*}
T_xM = \langle X\rangle \oplus E_{1}(x) \oplus \cdots \oplus E_{l(x)}(x)
\end{align*}
and numbers $\chi_1 < \cdots < \chi_l$ corresponding to the limits
\begin{align*}
  \chi_j = \lim\limits_{t \to +\infty} \frac{1}{t} \log \Vert DX_t(x) \cdot v\Vert,
\end{align*}
for every $v \in E_i(x)\setminus \{0\}, i = 1, \cdots, l(x)$.

In this setting, Wojtkowski \cite{Wojtk01} proved that the logarithm of the pseudo-Euclidean singular values $0 \leq r_q^- \leq \cdots \leq r_1^- \leq r_1^+ \leq \cdots \leq r_p^+$ of $DX_t$ are $\mu$-integrable, and obtained estimates of the Lyapunov exponents related to the singular eigenvalues of strictly $\J$-separated maps.

\begin{theorem}\cite[Corollary 3.7]{Wojtk01}
\label{thm:lyap-exp-sing-val}

For $1 \leq k_1 \leq p$  and $1 \leq k_2 \leq q$

\begin{align*}
\chi^-_1 + \cdots + \chi^-_{k_1} \leq \sum_{i=1}^{k_1} \int \log r^-_i d\mu \ \textrm{and} \ \chi^+_1 + \cdots + \chi^+_{k_2} \geq \sum_{i=1}^{k_2} \int \log r^+_i d\mu.
\end{align*}

\end{theorem}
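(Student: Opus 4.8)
The plan is to read off both inequalities from the submultiplicativity of the $\J$-volume expansion rates recorded in Proposition~\ref{pr:product-vol-exp}, combined with Kingman's subadditive ergodic theorem and the Oseledets decomposition. Passing to the ergodic decomposition of $\mu$, I may assume $\mu$ ergodic, so that all Lyapunov exponents are constant; here $r_i^\pm$ denote the (positive) pseudo-Euclidean singular values of the time-one map $DX_1(x)$ provided by Proposition~\ref{pr:J-separated-spectrum}, and $\log r_i^\pm\in L^1(\mu)$ since $M$ is compact and $DX_1,\J$ are continuous.

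First I would prove the positive-cone bound. Fix $d=k_2\le p$ and put $\varphi_d(t,x):=\log\sigma_d(DX_t(x))$. From $DX_{t+s}(x)=DX_s(X_tx)\circ DX_t(x)$ and the submultiplicativity $\sigma_d(L_1L_2)\ge\sigma_d(L_1)\sigma_d(L_2)$ of Proposition~\ref{pr:product-vol-exp} (each $DX_t$ being $\J$-separated on the trapping region), $(-\varphi_d(t,\cdot))_{t\ge0}$ is an integrable subadditive cocycle over $X_t$. Kingman's theorem then yields the a.e.\ and $L^1$ limit $\Phi_d(x):=\lim_{t\to\infty}\tfrac1t\varphi_d(t,x)$ together with
\begin{align*}
  \int\Phi_d\,d\mu=\sup_{t>0}\frac1t\int\varphi_d(t,\cdot)\,d\mu\ \ge\ \int\varphi_d(1,\cdot)\,d\mu=\sum_{i=1}^{d}\int\log r_i^+\,d\mu,
\end{align*}
the last equality being Proposition~\ref{pr:product-vol-exp} for the single operator $DX_1(x)$, namely $\sigma_d(DX_1(x))=r_1^+(DX_1(x))\cdots r_d^+(DX_1(x))$.

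Next I would identify $\Phi_d$, showing $\Phi_d\le\chi_1^++\cdots+\chi_d^+$, where $\chi_1^+\le\cdots\le\chi_p^+$ are the $p$ largest Lyapunov exponents (those carried by subspaces of $C_+$). It is enough to produce an Oseledets $d$-plane $W_x\subset C_+(x)$ with exponent sum $\chi_1^++\cdots+\chi_d^+$: then $\sigma_d(DX_t(x))$ is at most the $\J$-volume of $DX_t(x)|_{W_x}$, which, since $DX_t(x)W_x$ lies in a fixed compact subcone of $C_+$ for $t\ge1$ (where $\J$-volume and Riemannian volume are comparable up to a bounded factor), grows at the Riemannian rate $\chi_1^++\cdots+\chi_d^+$. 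The existence of such a $W_x$ uses the $\J$-separation again: the cone invariance $DX_t(C_+(x))\subset C_+(X_tx)$, strict $\J$-separation (Remark~\ref{rmk:J-separated-C-}) and Corollary~\ref{cor:compos-max-exp} force the Oseledets splitting to be subordinate to the pseudo-Euclidean structure. Combining $\chi_1^++\cdots+\chi_d^+\ge\int\Phi_d\,d\mu$ with the previous display gives the $\chi^+$ inequality, and the $\chi^-$ inequality follows by running the same argument for the time-reversed flow $X_{-t}$, which by Remark~\ref{rmk:J-separated-C-} is strictly $(-\J)$-separated: exchanging $C_+$ with $C_-$, inverting and reindexing the singular values, and reversing the sign of the exponents turns ``$\ge$'' into the required ``$\le$''. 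Alternatively one may quote the pseudo-Euclidean Oseledets theorem of \cite{Wojtk01}, which gives $\tfrac1t\log r_i^\pm(DX_t(x))\to\chi_i^\pm$ and hence $\Phi_d=\sum_{i\le d}\chi_i^+$ at once.

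I expect the main obstacle to be precisely this identification step: one must show that the $\J$-cone structure is compatible with the Oseledets splitting, so that a slowest Oseledets $d$-plane can be taken inside $C_+$ up to subexponential error, and one must control the $\J$-volume versus Riemannian-volume discrepancy along orbits. By contrast, the appeal to Kingman's theorem, the submultiplicativity input of Proposition~\ref{pr:product-vol-exp}, the reduction to ergodic $\mu$, and the index bookkeeping for the time reversal are all routine.
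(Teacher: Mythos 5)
The paper itself supplies no proof of this statement: Theorem~\ref{thm:lyap-exp-sing-val} is imported verbatim as \cite[Corollary 3.7]{Wojtk01}, so there is nothing in the present source to compare your argument against. What you have written is a reconstruction of Wojtkowski's proof, and it is faithful to the method of that reference: submultiplicativity of the pseudo-Euclidean $d$-volume rates $\sigma_d$ (Proposition~\ref{pr:product-vol-exp}), Kingman's theorem applied to the superadditive cocycle $\log\sigma_d(DX_t(x))$, and an identification of the limit via the Oseledets splitting, with the negative-cone inequality obtained by time reversal and Remark~\ref{rmk:J-separated-C-}. So "different from the paper's proof" does not apply; the only honest comparison is with \cite{Wojtk01}, where your outline matches the structure of the original argument.

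Two small points worth recording. First, the ranges $1\le k_1\le p$, $1\le k_2\le q$ in the paper's statement are inconsistent with the ordering $0<r^-_q\le\cdots\le r^-_1\le r^+_1\le\cdots\le r^+_p$ fixed just before it; the indices should be $k_1\le q$ (for $r^-$) and $k_2\le p$ (for $r^+$), and you implicitly use the corrected version when you take $d=k_2\le p$ on the positive side. Second, you are right that the only genuinely delicate step is $\Phi_d\le\chi_1^++\cdots+\chi_d^+$. This does go through, but the clean way to say it is: strict $\J$-separation forces the $p$-dimensional Oseledets "fast" subbundle $E^+_x$ to lie inside $C_+(x)$ (it is the forward attractor of the cone dynamics $DX_t(C_+\cup C_0)\subset C_+$), and the $\J$-inner product restricted to $E^+_x$ is uniformly comparable to the Riemannian one on the compact invariant set; taking $W_x\subset E^+_x$ to be the sum of the Oseledets subspaces realizing the $d$ smallest exponents of $E^+$ gives $\alpha_d(DX_t(x);W_x)=e^{(\chi_1^++\cdots+\chi_d^+)t+o(t)}$, and $\sigma_d\le\alpha_d(\cdot;W_x)$ since $\sigma_d$ is an infimum over $d$-planes in $C_+$. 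With that made explicit, your sketch closes; alternatively, one can simply invoke the pseudo-Euclidean Oseledets theorem from \cite{Wojtk01} giving $\tfrac1t\log r^\pm_i(DX_t(x))\to\chi^\pm_i$, exactly as you note at the end, which is in fact how the cited Corollary 3.7 is deduced there.
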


Look that, if $X_t$ is a $\J$-separated flow on $\Lambda$, for each diffeomorphism $DX_t$ if we fix $t > 0$, the last theorem holds for $r^{\pm, t}_i$, where $r^{\pm, t}_i$ are the singular $\J$-values of $DX_t$.

\section{Proof of Theorems}
\hfill

In this section, we prove our mains results.

First, we prove Theorem \ref{mthm:sec-hyp-equiv}, by using Corollary \ref{mcor:p-sing-lyap-exp} which is proved below.

\begin{proof}[Proof of Theorem \ref{mthm:sec-hyp-equiv}]
Suppose $\Lambda$ $p$-singular hyperbolic set of index $\indi$. Then, $1 \leq \indi \leq n-2$ and there is a dominated splitting $T_{\Lambda}M = E \oplus F$, where $E$ is uniformly contracting and $F$ is uniformly $p$-sectionally expanding. Moreover, $\langle X \rangle \subset F$, by Lemma \ref{le:flow-center}. By using adapted metric \cite{Goum07}, we construct the quadratic forms $\J$ such that $X$ is non-negative strictly $\J$-separated.
By Proposition \ref{pr:J-separated-spectrum} and Corollary \ref{cor:compos-max-exp}, there is a $\J$-diagonalization of $DX_t$ by a $\J$-isometry, that we are also denoting by $DX_t$, such that its spectrum has the required properties. In fact, for each singular value $r_i^-$ corresponding to the contracting subspace, we must have $r_i^- < 1$. Analogously, as $F$ is a $p$-sectionally expanding subbundle, the sum of each $p$  corresponding singular value, $r_{i_1}^+, \cdots, r_{i_p}^+$, must be greater than one. Even including the corresponding field direction.

Reciprocally, suppose that in a total probability subset of $\Lambda$ we have $r_1^-<1$ and $\Pi_{j=1}^{p} \  r_{i_j}^+ > 1$, \ \textrm{where} \ $2 \leq p \leq \dim(M) - \indi(\J)$.

Moreover, strictly $\J$-separation guarantees that there exists a dominated splitting.
Let $T_{\Lambda}M = E \oplus F$ the corresponding splitting and the decomposition in direct sum of Lyapunov subspaces
\begin{align*}
  E_x = \oplus_{j=0}^{r} E_j(x), F_x = \oplus_{j=0}^{s(x)-1} F_j(x).
\end{align*}

By Theorem \ref{thm:lyap-exp-sing-val},
\begin{align*}
\chi^-_1 + \cdots + \chi^-_{r} \leq \sum_{i=1}^{r} \int \log r^-_i d\mu \ \textrm{and} \ \chi^+_{i_0} + \cdots + \chi^+_{i_p} \geq \sum_{j=1}^{p} \int \log r^+_{i_j} d\mu.
\end{align*}
So, we obtain that the Lyapunov exponents over $E$ are all of them negative and the $p$-sectional Lyapunov exponents on $F$ are all of them positive, in a total probability subset. Now, Theorem \ref{mthm:Lyapunov-domination} and Corollary \ref{mcor:p-sing-lyap-exp} imply that $\Lambda$ is a $p$-singular hyperbolic set for $X$.
\end{proof}

We recall now that, fixed a compact $X_t$-invariant subset $\Lambda$, we say
that a family of functions $\{f_t:\Lambda\to \RR\}_{t\in \RR}$
is subadditive if for every $x\in M$ and $t,s\in \RR$ we
have that $f_{t+s}(x)\leq f_s(x)+f_t(X_s(x))$.

\begin{proof}[Proof of Theorem \ref{mthm:Lyapunov-domination}]

Note that, once $T_{\Lambda}M = E \oplus F$ is a dominated splitting, there is an indefinite $C^1$ field of quadratic forms $\J$ such a way $X$ is strictly separated and, by Proposition \ref{pr:J-separated-spectrum},
\begin{align*}
      0<r_-^q\le\dots\le r_-^1=r_- < r_+=r_1^+\le\dots\le r_+^p.
    \end{align*}

Moreover, by Corollary \ref{thm:lyap-exp-sing-val},
\begin{align*}
\chi^-_1 + \cdots + \chi^-_{k_1} \leq \sum_{i=1}^{k_1} \int \log r^-_i d\mu \ \textrm{and} \ \chi^+_1 + \cdots + \chi^+_{k_2} \geq \sum_{i=1}^{k_2} \int \log r^+_i d\mu.
\end{align*}
Since $r_- - r_+ < 0$, we obtain
\begin{align*}
   \liminf\limits_{t \to +\infty} \frac{1}{t} \log \vert DX_t\vert_{E_x}\vert - \limsup\limits_{t \to +\infty} \frac{1}{t}\log m(DX_t\vert_{F_x}) =\\
   = \max\{\chi_i^E(x),1\le i\le r(x)\}
  -
  \min\{\chi_i^F(x),1\le i\le s(x)\}
  \le \eta <0,
\end{align*}
for all $x \in \Lambda$, in particular, in a total probability set.

Reciprocally, suppose that there exists a continuous invariant decomposition, $T_\Lambda M = E \oplus F$, and $\eta < 0$ such that
\begin{align*}
   \liminf\limits_{t \to +\infty} \frac{1}{t} \log \vert DX_t\vert_{E_x}\vert - \limsup\limits_{t \to +\infty} \frac{1}{t}\log m(DX_t\vert_{F_x})\le \eta <0,
\end{align*}
in a total probability set in $\Lambda$.

Consider $f_t(x)=\log \frac{\| DX_t|{E_x}\|}{m(DX_t\mid
  F_x)}$, which is a subadditive family of continuous functions and satisfies
  \begin{align*}
  \overline{f}(x)
  &=
  \liminf_{t\to+\infty}\frac{f_t(x)}{t}\le
  \liminf_{n\to+\infty}\frac1t\log\|
  DX_t|{E_x}\|
  -\limsup_{n\to+\infty}\frac1t\log m(
  DX_t|{F_x})\le \eta < 0.
\end{align*}
By Subadittive Ergodic Theorem \cite{Ki}, the function $\overline{f}(x)=\liminf\limits_{t\to+\infty}\frac{f_t(x)}{t}$ coincides with $\widetilde{f}(x)=\lim\limits_{t\to+\infty}\frac{1}{t}f_t(x)$ in a set of total probability. Moreover, for any invariant measure $\mu$ we have that $\int \widetilde{f}d\mu=\lim\limits_{t\to+\infty}\int \frac{f_t}{t}d\mu$.

Thus, we can use the following result from \cite{ArbSal}.

\begin{proposition}\cite[Corollary 4.2]{ArbSal}\label{prop:subadd}
Let $\{t\mapsto f_t:S\to \RR\}_{t\in \RR}$ be a continuous family of continuous functions which is subadditive and suppose that $\int \widetilde{f}(x) d\mu < 0$ for every $\mu\in \mathcal{M}_X$, with $\widetilde{f}(x):=\lim\limits_{t\to+\infty}\frac{1}{t}f_t(x)$. Then there exist a $T > 0$ and a constant $\eta < 0$ such that for every $x\in S$ and every $t \geq T$:
$$f_t(x) \leq  \eta t.$$
\end{proposition}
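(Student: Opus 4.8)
The statement is quoted as \cite[Corollary 4.2]{ArbSal}; here is the argument I would give. The plan is to deduce it from the continuous‑time subadditive ergodic theorem together with a compactness/empirical‑measure argument, taking $S$ to be a compact $X_t$‑invariant set (as it is, being $\Lambda$, in the application). First I would reduce the conclusion to a single numerical claim. Put $g(t):=\sup_{x\in S}f_t(x)$, which is finite and continuous since $(t,x)\mapsto f_t(x)$ is continuous and $S$ is compact. Subadditivity of the family and $X_s(S)=S$ give $g(t+s)\le g(s)+g(t)$, so by Fekete's lemma $\frac1t g(t)\to c:=\inf_{t>0}\frac1t g(t)$. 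If $c<0$, then for $\eta:=c/2<0$ there is $T>0$ with $g(t)\le\eta t$ for all $t\ge T$, i.e.\ $f_t(x)\le\eta t$ for every $x\in S$ and $t\ge T$ — exactly the conclusion; conversely the conclusion forces $c<0$. So it suffices to prove $c<0$.

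Assume, for a contradiction, that $c\ge0$. Choose $t_k\to+\infty$ and, by compactness of $S$, points $x_k\in S$ with $f_{t_k}(x_k)=g(t_k)\ge c\,t_k$. Form the empirical probability measures $\mu_k:=\frac1{t_k}\int_0^{t_k}\delta_{X_u(x_k)}\,du$ on $S$; by weak‑$*$ compactness pass to a subsequence with $\mu_k\to\mu$, and the standard Krylov--Bogolyubov estimate ($\int\phi\circ X_\tau\,d\mu_k-\int\phi\,d\mu_k\to0$ for $\phi\in C(S)$, $\tau\in\RR$) shows $\mu\in\mathcal{M}_X$. The core step is the inequality $c\le\frac1s\int f_s\,d\mu$ for every fixed $s>0$. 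To get it, fix $s>0$; for $u\in[0,s]$ write $t_k=u+m_k(u)s+r_k(u)$ with $m_k(u)\in\NN$, $0\le r_k(u)<s$, and iterate $f_{a+b}(x)\le f_a(x)+f_b(X_a(x))$ to obtain
$$f_{t_k}(x_k)\le f_u(x_k)+\sum_{j=0}^{m_k(u)-1}f_s\bigl(X_{u+js}(x_k)\bigr)+f_{r_k(u)}\bigl(X_{u+m_k(u)s}(x_k)\bigr).$$
Integrating in $u$ over $[0,s]$ and dividing by $s$, the middle term becomes $\frac1s\int_0^{t_k}f_s(X_v(x_k))\,dv$ up to an additive error bounded by a constant $C_s$ depending only on $s$ and on $\sup\{|f_u(x)|:u\in[0,s],\,x\in S\}$ (finite by joint continuity and compactness), and the two outer terms are of the same order; hence $f_{t_k}(x_k)\le\frac{t_k}{s}\int f_s\,d\mu_k+C_s$. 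Dividing by $t_k$, letting $k\to\infty$ along the subsequence, and using $\frac1{t_k}f_{t_k}(x_k)=\frac1{t_k}g(t_k)\ge c$ together with $\int f_s\,d\mu_k\to\int f_s\,d\mu$ (as $f_s\in C(S)$), we get $c\le\frac1s\int f_s\,d\mu$.

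Finally, since $s\mapsto\int f_s\,d\mu$ is subadditive by invariance of $\mu$, the continuous‑time Kingman theorem (recorded just above the statement) gives $\inf_{s>0}\frac1s\int f_s\,d\mu=\lim_{s\to+\infty}\frac1s\int f_s\,d\mu=\int\widetilde f\,d\mu$; taking the infimum over $s$ in the previous inequality yields $c\le\int\widetilde f\,d\mu$. As $\mu\in\mathcal{M}_X$, the hypothesis gives $\int\widetilde f\,d\mu<0$, so $c<0$, contradicting $c\ge0$. Hence $c<0$, and the first paragraph produces the required $T>0$ and $\eta=c/2<0$. I expect the only genuinely delicate point to be this ``core step'': matching the discrete subadditive iteration to the continuous empirical measure and absorbing the endpoint contributions into the $s$‑dependent constant $C_s$; the Fekete reduction, the Krylov--Bogolyubov argument, and the passage $s\to\infty$ are routine.
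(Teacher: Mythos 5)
The paper does not actually prove this proposition; it imports it verbatim from \cite[Corollary~4.2]{ArbSal}. Your argument is the standard one for such statements and, as far as I can tell, is essentially the proof given in that reference (and in close relatives such as \cite[Proposition~3.4]{arbieto2010}): reduce via Fekete's lemma applied to $g(t)=\sup_{x\in S}f_t(x)$ to showing $c:=\lim_t g(t)/t<0$, then argue by contradiction by extracting an invariant limit $\mu$ of the time-averaged empirical measures along near-maximizers and showing $c\le\frac1s\int f_s\,d\mu$ for all $s$, hence $c\le\int\widetilde f\,d\mu<0$ by Kingman. All the individual steps check out, including the bookkeeping that makes the error term $C_s$ independent of $k$ (the three endpoint contributions are each controlled by $\sup_{u\in[0,s],x\in S}|f_u(x)|$, finite by joint continuity and compactness of $[0,s]\times S$), and the Krylov--Bogolyubov argument showing $\mu\in\mathcal M_X$. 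One small remark: you implicitly assume $c>-\infty$ when quoting Fekete as a genuine limit; this is harmless since $c=-\infty$ gives the conclusion trivially, and the contradiction branch already assumes $c\ge0$. You also correctly restrict to the intended setting ($S$ compact and flow-invariant), which is what the citation and the application to $\Lambda$ require even though the statement as quoted leaves it implicit.
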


Note that, all of the last accounts are true independently to $x$ is either a regular or a singular point.

Hence, we obtain $f_t(x) \leq k - \eta t, t \geq 0, x \in \Lambda$, for some constant $k > 0$, and this gives us the domination property on $\Lambda$.

\end{proof}

Now, we prove the Corollary \ref{mcor:equiv-partial}.

\begin{proof}[Proof of Corollary \ref{mcor:equiv-partial}]

Suppose that we are under the hypothesis.

By Theorem \ref{mthm:Lyapunov-domination}, $E \oplus F$ is a dominated splitting on $\Lambda$.

Since $E$ is an invariant subbundle, consider $f_t(x) = \log \Vert DX_t\vert_{E_{x}}\Vert, t\in \RR$, as our subadditive family.

As in the proof of Theorem \ref{mthm:Lyapunov-domination}, we obtain $f_t(x) \leq k - \eta t, t \geq 0, x \in \Lambda$, for some constant $k > 0$. This means that $E$ is uniformly contracting under the action of $DX_t$.

The case of positive Lyapunov exponents over $F$ is analogous, by taking $f_t(x) = \log \Vert DX_{-t}\vert_{F_x} $. (Also see proof of \cite[Theorem B]{AraArbSal}).

For the converse, by using adapted metrics (as in the proof \cite[Theorem A]{ArSal2012}) we obtain a $C^1$ field $\J$ of nondegenerate quadratic forms for which $X$ is nonnegative strictly separated. Now, Proposition \ref{pr:J-separated-spectrum} and Theorem \ref{thm:lyap-exp-sing-val} complete the proof.
\end{proof}

Finally, the proof of Corollary \ref{mcor:p-sing-lyap-exp}.

We also need to use the following lemma.

Let $\Lambda$ be a compact invariant set for a flow
  $X$ of a $C^1$ vector field $X$ on $M$.
\begin{lemma} \cite{AraArbSal}
  \label{le:flow-center}
  Given a continuous splitting $T_\Lambda M = E\oplus F$
  such that $E$ is uniformly contracted, then $X(x)\in F_x$ for all $x\in \Lambda$.
\end{lemma}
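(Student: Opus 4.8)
The plan is to reduce immediately to regular points and then exploit the uniform contraction of $E$ read \emph{backwards} in time. If $\sigma\in\Lambda\cap\sing(X)$ then $X(\sigma)=0\in F_\sigma$, so fix a regular point $x\in\Lambda$ and decompose $X(x)=v_E+v_F$ with $v_E\in E_x$ and $v_F\in F_x$ according to the splitting. Since the splitting is $DX_t$-invariant and $DX_t(X(x))=X(X_t(x))$, the $E$-component of $X(X_t(x))$ equals $DX_t(v_E)\in E_{X_t(x)}$ for every $t\in\RR$; it therefore suffices to prove $v_E=0$.

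First I would record two uniform bounds. As $M$ is compact, $A:=\sup_{y\in M}\|X(y)\|<\infty$. As $E\oplus F$ is a continuous splitting over the compact set $\Lambda$, the projections $\pi^E_y:T_yM\to E_y$ parallel to $F_y$ vary continuously with $y$, so $\|\pi^E_y\|\le B<\infty$ uniformly on $\Lambda$. Since $\Lambda$ is invariant, $X_{-t}(x)\in\Lambda$ for all $t>0$, and by the observation above $\pi^E_{X_{-t}(x)}\big(X(X_{-t}(x))\big)=DX_{-t}(v_E)$, whence
\[
\|DX_{-t}(v_E)\|\ \le\ B\,\|X(X_{-t}(x))\|\ \le\ BA,\qquad t>0 .
\]

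On the other hand, uniform contraction of $E$ forward in time yields uniform expansion of $E$ backward in time: applying $\|DX_t|_{E_y}\|\le Ke^{-\lambda t}$ with $y=X_{-t}(x)$ and $w:=DX_{-t}(v_E)\in E_y$, and using $DX_t(w)=v_E$, gives $\|v_E\|\le Ke^{-\lambda t}\|DX_{-t}(v_E)\|$, i.e. $\|DX_{-t}(v_E)\|\ge K^{-1}e^{\lambda t}\|v_E\|$. Combining with the previous display, $K^{-1}e^{\lambda t}\|v_E\|\le BA$ for every $t>0$; letting $t\to+\infty$ forces $v_E=0$, so $X(x)\in F_x$. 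As $x$ was an arbitrary regular point and the singular case is trivial, this proves the lemma.

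The only step requiring any care is the uniform bound $\|\pi^E_y\|\le B$ on $\Lambda$: this is precisely where the continuity of the splitting and the compactness of $\Lambda$ enter, and it is the single non-formal ingredient; everything else is a direct manipulation of the (co)norm estimates. One could equivalently phrase the backward expansion through the conorm $m\big(DX_{-t}|_{E_{X_{-t}(x)}}\big)\ge K^{-1}e^{\lambda t}$, but the computation above is the most transparent.
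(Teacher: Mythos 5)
Your proof is correct and is precisely the standard argument used in \cite{AraArbSal} (the paper here only quotes the lemma and does not reproduce its proof): decompose $X(x)$ along $E\oplus F$, observe that the $E$-component is carried by $DX_t$ because $X$ is invariant along the flow and the splitting is $DX_t$-invariant, then pit the uniform bound $\|\pi^E_{X_{-t}(x)}X(X_{-t}(x))\|\le BA$ against the backward exponential growth forced by contraction of $E$ to conclude $v_E=0$. No gaps; the uniform bound on the projections, which you flag as the one non-formal ingredient, is indeed exactly where continuity of the splitting and compactness of $\Lambda$ are used.
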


\begin{proof}[Proof of Corollary \ref{mcor:p-sing-lyap-exp}]

By Theorem \ref{mthm:Lyapunov-domination}, $T_\Lambda M = E \oplus F$ is a dominated splitting.

If $x=\sigma \in \sing(X)$, by hyperbolicity, we obtain the desired features.

Following Corollary \ref{mcor:equiv-partial}, we obtain that this is a partially hyperbolic splitting as well, with subbundle $E$ uniformly contracting.
By Lemma \ref{le:flow-center}, if $x$ is a regular point, the flow direction $E^X(x)$ is contained in $F(x)$.

Since $F$ is an invariant subbundle, consider $f_t(x) = \log \Vert \wedge^p DX_t\vert_{F_{x}}\Vert, t\in \RR$,
and there is a decomposition in direct sum of Lyapunov subspaces
\begin{align*}
  F_x = \oplus_{j=0}^{s(x)-1} F_j(x).
\end{align*}
  One of them, say $E^X = F_0(x)$, generated by $X(x) \neq 0$. Denote by $\chi_j^F(x), j=1, \cdots s(x)-1$
  the corresponding Lyapunov exponents.

   Fixing $i_1, \cdots, i_p \in \{1, \cdots, s(x)-1\}$ and
   considering $p$ vectors $v_1 \in F_{i_1}\setminus\{0\}, \cdots v_{p-1} \in F_{i_{p-1}}\setminus\{0\}$,
   put $L = span \{X(x), v_1, \cdots, v_{p-1}\}$ as the generated $p$-plane.

  From assumption,
  \begin{align*}
    0 < \chi \leq \liminf\limits_{t \to +\infty} \frac{1}{t} \log \vert \wedge^p DX_t\vert_L \vert = \chi_0^F + \chi_{i_1}^F +\cdots +\chi_{i_{p-1}}^F,
  \end{align*}
  and we obtain
  \begin{align*}
    \sum_{j=1}^{p}\chi_{i_j}^F \geq \chi > 0, \forall i_j \in \{1, \cdots, s(x)-1\}.
  \end{align*}

For some singularity, $\sigma \in \Lambda$, we must have $\overline{f(\sigma)} \leq - \chi$,
as a consequence of domination .

Now applying the following proposition from \cite{arbieto2010}:

\begin{proposition}
\label{prop3.4-arbieto}
Let $\{t\mapsto f_t:\Lambda\to \RR\}_{t\in \RR}$ be a
continuous family of continuous function which is
subadditive and suppose that $\ov{f}(x)<0$ in a set of
total probability. Then there exist constants $C>0$ and
$\lambda<0$ such that for every $x\in \Lambda$ and every
$t>0$ we have
$\exp(f_t(x))\leq C \exp(\frac{\lambda t}{2}),$
\end{proposition}

to the function $f_t(x)$ give us constants $D > 0$ and
$\eta < 0$ for which $\|\wedge^p DX_{-t}|_{\wedge^p F_{X_t(x)}}\|\le
De^{\eta t}$, so $F$ is a $p$-sectionally expanding subbundle.

The converse follows from the lines of the last proof, by using Proposition \ref{pr:J-separated-spectrum} and Theorem \ref{thm:lyap-exp-sing-val}. So, we are done.

\end{proof}

\bibliographystyle{amsplain}

\begin{thebibliography}{10}

\bibitem{ArSal2012}
V.~Araujo and L.~Salgado.
\newblock Infinitesimal {L}yapunov functions for singular flows.
\newblock {\em Mathematische Zeitschrift}, 275(3-4):863--897, 2013.

\bibitem{ArSal2015} V. Ara{\'u}jo, L. S. Salgado.
\newblock Dominated splittings for exterior powers and singular hyperbolicity.
\newblock {\em J. Differential Equations}, 259, 3874--3893. 2015.

\bibitem{AraArbSal}
V.~Araujo, A.~Arbieto, and L.~Salgado.
\newblock Dominated splittings for flows with singularities.
\newblock {\em Nonlinearity}, 26, 2391--2407. 2013.

\bibitem{ArbSal}
A.~Arbieto and L.~Salgado.
\newblock On critical orbits and sectional hyperbolicity of the nonwandering
  set for flows.
\newblock {\em Journal of Differential Equations}, 250:2927--2939, 2011.



\bibitem{AraPac2010}
V.~Ara{\'u}jo and M.~J. Pacifico.
\newblock {\em Three-dimensional flows}, volume~53 of {\em Ergebnisse der
  Mathematik und ihrer Grenzgebiete. 3. Folge. A Series of Modern Surveys in
  Mathematics [Results in Mathematics and Related Areas. 3rd Series. A Series
  of Modern Surveys in Mathematics]}.
\newblock Springer, Heidelberg, 2010.
\newblock With a foreword by Marcelo Viana.




\bibitem{arbieto2010}
A.~Arbieto.
\newblock Sectional lyapunov exponents.
\newblock {\em Proc. of the Amercian Mathematical Society}, 138:3171--3178,
  2010.


\bibitem{A} L., Arnold.
\newblock Random Dynamical Systems.
\newblock Springer-Verlag. 2003.


\bibitem{BDV2004}
C.~Bonatti, L.~J. D{\'i}az, and M.~Viana.
\newblock {\em {Dynamics beyond uniform hyperbolicity}}, volume {102} of {\em
  {Encyclopaedia of Mathematical Sciences}}.
\newblock {Springer-Verlag}, {Berlin}, {2005}.
\newblock {A global geometric and probabilistic perspective, Mathematical
  Physics, III}.


\bibitem{Goum07}
N.~Gourmelon.
\newblock Adapted metrics for dominated splittings.
\newblock {\em Ergodic Theory Dynam. Systems}, 27(6):1839--1849, 2007.

\bibitem{HPS77}
M.~Hirsch, C.~Pugh, and M.~Shub.
\newblock {\em {Invariant manifolds}}, volume {583} of {\em {Lect. Notes in
  Math.}}
\newblock {Springer Verlag}, {New York}, {1977}.


\bibitem{BurnKatok94}
A.~Katok.
\newblock Infinitesimal {L}yapunov functions, invariant cone families and
  stochastic properties of smooth dynamical systems.
\newblock {\em Ergodic Theory Dynam. Systems}, 14(4):757--785, 1994.
\newblock With the collaboration of Keith Burns.

\bibitem{Ki}
J.F.C. Kingman.
\newblock The ergodic theory of subadditive stochastic processes.
\newblock {\em J. Roy.Statist. Soc. Ser. B}, 30, 499-510, 1968.

\bibitem{lewow80}
J.~Lewowicz.
\newblock Lyapunov functions and topological stability.
\newblock {\em J. Differential Equations}, 38(2):192--209, 1980.

\bibitem{Maltsev63}
A.~I. Mal'cev.
\newblock {\em Foundations of linear algebra}.
\newblock Translated from the Russian by Thomas Craig Brown; edited by J. B.
  Roberts. W. H. Freeman \& Co., San Francisco, Calif.-London, 1963.

\bibitem{Man82}
R.~Ma{\~n}{\'e}.
\newblock {An ergodic closing lemma}.
\newblock {\em {Annals of Math.}}, {116}:{503--540}, {1982}.


\bibitem{MeMor06}
R.~Metzger and C.~Morales.
\newblock Sectional-hyperbolic systems.
\newblock {\em Ergodic Theory and Dynamical System}, 28:1587--1597, 2008.


\bibitem{MPP99}
C.~A. Morales, M.~J. Pacifico, and E.~R. Pujals.
\newblock Singular hyperbolic systems.
\newblock {\em Proc. Amer. Math. Soc.}, 127(11):3393--3401, 1999.

\bibitem{MPP04}
C.~A. Morales, M.~J. Pacifico, and E.~R. Pujals.
\newblock {Robust transitive singular sets for 3-flows are partially hyperbolic
  attractors or repellers}.
\newblock {\em {Ann. of Math. (2)}}, {160}({2}):{375--432}, {2004}.

\bibitem{Os68}
V.~I. Oseledec.
\newblock {A multiplicative ergodic theorem: Lyapunov characteristic numbers
  for dynamical systems}.
\newblock {\em {Trans. Moscow Math. Soc.}}, {19}:{197--231}, {1968}.


\bibitem{Pota60}
V.~P. Potapov.
\newblock The multiplicative structure of {$J$}-contractive matrix functions.
\newblock {\em Amer. Math. Soc. Transl. (2)}, 15:131--243, 1960.
\newblock Translation of Trudy Moskovskogo Matemati\v ceskogo Ob\v s\v cestva 4
  (1955), 125--236.

\bibitem{Pota79}
V.~P. Potapov.
\newblock Linear-fractional transformations of matrices.
\newblock In {\em Studies in the theory of operators and their applications
  ({R}ussian)}, pages 75--97, 177. ``Naukova Dumka'', Kiev, 1979.


\bibitem{Wa82}
P.~Walters.
\newblock {\em {An introduction to ergodic theory}}.
\newblock {Springer Verlag}, {1982}.

\bibitem{Wojtk85}
M.~Wojtkowski.
\newblock Invariant families of cones and {L}yapunov exponents.
\newblock {\em Ergodic Theory Dynam. Systems}, 5(1):145--161, 1985.

\bibitem{Wojtk00}
M.~P. Wojtkowski.
\newblock Magnetic flows and {G}aussian thermostats on manifolds of negative
  curvature.
\newblock {\em Fund. Math.}, 163(2):177--191, 2000.


\bibitem{Wojtk01}
M.~P. Wojtkowski.
\newblock Monotonicity, {$J$}-algebra of {P}otapov and {L}yapunov exponents.
\newblock In {\em Smooth ergodic theory and its applications ({S}eattle, {WA},
  1999)}, volume~69 of {\em Proc. Sympos. Pure Math.}, pages 499--521. Amer.
  Math. Soc., Providence, RI, 2001.

\bibitem{Wojtk09}
M.~P. Wojtkowski.
\newblock A simple proof of polar decomposition in pseudo-{E}uclidean geometry.
\newblock {\em Fund. Math.}, 206:299--306, 2009.

\end{thebibliography}

\end{document}